\theoremstyle{plain}
\newtheorem{thm}{Theorem}[section]
\newtheorem{prop}[thm]{Proposition}
\newtheorem{lemma}[thm]{Lemma}
\theoremstyle{definition}
\newtheorem{mydef}[thm]{Definition}
\newtheorem{remark}[thm]{Remark}
\Crefname{prop}{Proposition}{Propositions}
\numberwithin{equation}{section} 
\DeclarePairedDelimiter{\paren}{\lparen}{\rparen}
\newcommand{\M}{{\mathcal{M}}}
\renewcommand{\d}{\mathsf{d}}
\newcommand{\R}{{\mathbb{R}}}
\newcommand{\T}{{\mathbb{T}}}
\newcommand{\g}{{\mathsf{g}}}
\renewcommand{\M}{{\mathbb{M}}}
\newcommand{\tl}{\tilde}
\newcommand{\D}{\Delta}
\newcommand{\ph}{\phantom{=}}
\newcommand{\nn}{\nonumber}
\newcommand{\ux}{\underline{x}}
\newcommand{\ep}{\varepsilon}
\newcommand{\be}{\beta}
\newcommand{\ka}{\kappa}
\newcommand{\XN}{X_N}
\newcommand{\YN}{Y_N}
\newcommand{\Cs}{\mathsf{C}}
\newcommand{\Fr}{{F}}
\newcommand{\Ec}{\mathcal{E}}
\renewcommand{\P}{\mathcal{P}}
\newcommand{\PNbeta}{\mathbb{P}_{N,\beta}}
\newcommand{\Esp}{\mathbb{E}}
\let\div\relax
\DeclareMathOperator{\div}{div}
\def\XXint#1#2#3{{\setbox0=\hbox{$#1{#2#3}{\int}$ }
\vcenter{\hbox{$#2#3$ }}\kern-.6\wd0}}
\def \hal{\frac{1}{2}}
\def\({\left(}
\def\){\right)}
\def \ep{\varepsilon}
\def\nab{\nabla}
\def\indic{\mathbf{1}}
\title[Modulated LSI and generation of chaos]{Modulated logarithmic Sobolev inequalities and generation of chaos}
\author[M. Rosenzweig]{Matthew Rosenzweig}
\address{Matthew Rosenzweig, Carnegie Mellon University, Department of Mathematical Sciences, Pittsburgh, PA} 
\email{mrosenz2@andrew.cmu.edu}
\thanks{M. R. is supported in part by the Simons Foundation through the Simons Collaboration on Wave Turbulence and by NSF grants DMS-2052651, DMS-2206085.}
\author[S. Serfaty]{Sylvia Serfaty}
\address{Sylvia Serfaty, Courant Institute of Mathematical Sciences, New York University, New York City, NY}
\email{serfaty@cims.nyu.edu}
\thanks{S. S. is supported  by the Simons Foundation through a Simons Investigator award, by NSF grant DMS-2247846, and by an excellence chair of the Fondation Sciences Math\'ematiques de Paris and PSL}
\begin{document}
\begin{abstract}
We consider mean-field limits for  overdamped Langevin dynamics of $N$ particles with possibly singular interactions. 
It has been shown that a modulated free energy method can be used to prove the mean-field convergence or propagation of chaos for a certain class of interactions, including Riesz kernels. We show here that generation of chaos, i.e. exponential in time convergence  to a tensorized (or iid) state starting from a nontensorized one, can be deduced from the modulated free energy method provided  a uniform-in-$N$ ``modulated logarithmic Sobolev inequality" holds. Proving such an inequality is  a question of independent interest, which is generally difficult. 
As an illustration, we show  that uniform  modulated logarithmic Sobolev inequalities can be proven for a class of situations in one dimension.
\end{abstract}
\maketitle

\section{Introduction}
Consider a canonical Gibbs measure for $N$ particles with energy $\mathcal H_N(\XN)$, with $X_N \coloneqq (x_1, \dots, x_N) $, $x_i\in \R^\d$, of the form 
\begin{equation}
\label{gibbs}
d\mathbb{P}_{N,\beta}(\XN)= \frac{1}{Z_{N,\beta}} e^{-\beta \mathcal H_N(\XN)} dx_1\dots x_N.\end{equation}
It is well know that if $\mathbb{P}_{N,\beta}$ satisfies a  Poincar\'e  (or spectral gap) or logarithmic Sobolev inequality (LSI), with a constant independent of $N$, then the joint law of the particles under the overdamped Langevin (Glauber) dynamics 
\begin{equation}
\label{langevin}
dx_{i}^t = -\nabla_i \mathcal H_N(\XN) +  \sqrt{\frac{2}{\beta}}dW_i^t \qquad i\in\{1,\ldots,N\},
\end{equation}
where the $W_i^t$ are independent standard Brownian motions, converges exponentially fast in time to the steady state $\PNbeta$. See, for instance, \cite{Villani2004,BGL2014}. A Poincar\'e inequality or LSI is satisfied as soon as $\mathcal{H}_N$ satisfies a uniform strict convexity condition of the form $\mathrm{Hess} \ \mathcal{H}_N \ge c I_{dN\times dN}$ with $c>0$, with the Poincar\'{e}/LSI constant only depending on $c$ \cite{BE1985}. Proving uniform LSIs  meaningfully beyond this uniformly convex case is in general hard and the object of current efforts. We refer to \cite{BB2019spin,BB2021sg} for some instances of progress. For a taste of the extensive literature on LSIs, we refer to \cite{ABCFGIMRS2000}.

In this note, we are interested in the particular case of pair interaction energies of the form 
\begin{equation}
\label{energy}
\mathcal H_N(\XN)= \frac1{2N} \sum_{1\leq i\neq j\leq N} \g(x_i,x_j) + \sum_{i=1}^N V(x_i),
\end{equation}
where again $\XN= (x_1, \dots, x_N ) \in (\R^\d)^N$; $\g:(\R^\d)^2\rightarrow [-\infty,\infty]$ is some symmetric interaction potential belonging to a class to be specified later and is similar to that considered in \cite{JW2018,BJW2019edp,BJW2020,CdCRS2023},  which includes repulsive Coulomb and Riesz interactions of the form 
\begin{equation}\label{rieszcase}
\g(x,y) = \begin{cases} -\log|x-y|, & {s=0} \\ \frac{1}{s} |x-y|^{-s}, & {s<\d}, \end{cases}
\end{equation}
 as well as moderately attractive ones; and $V$ is some confinement potential.

In that case, the  overdamped Langevin  dynamics  is  of the form
\begin{equation}
\label{eq:SDE}
\begin{cases}
dx_{i}^t =  \displaystyle \( -\frac{1}{N}\sum_{1\leq j\leq N : j\neq i} \nabla_1\g(x_i^t,x_j^t)  -\nabla V(x_i^t)   \) dt + \sqrt{\frac{2}{\beta}}dW_i^t\\
x_i^t|_{t=0} = x_i^0
\end{cases}\qquad i\in\{1,\ldots,N\},
\end{equation}
with  $x_i^0\in\R^\d$ the pairwise distinct initial positions. Here, $\nabla_1$ denotes the gradient with respect to the first argument of $\g$. The mean-field limit, or equivalently {\it propagation of chaos}, for such evolutions has been proved for $\g$ sufficiently regular by many classical methods \cite{JW2017_survey},  for $\g$ possibly singular (with $V=0$) by the relative entropy method in \cite{JW2018}, and for $\g$ even more singular and Coulomb/Riesz-like  by the modulated free energy method \cite{BJW2019edp,BJW2020,CdCRS2023}  using the modulated energy  of \cite{Duerinckx2016,Serfaty2020}.  We will recall these methods below but suffice it to say that propagation of chaos means that if the initial data is distributed according to the probability  distribution $f_N^0(\XN) 
= \mu^0(x_1) \dots \mu^0(x_N)$ on $(\R^\d)^N$ (i.e., the particles are iid with common law $\mu^0$), then the solution  $f_N^t$ of the $N$-particle Liouville/forward Kolmogorov equation associated to the dynamics \eqref{eq:SDE} is such that 
\begin{equation}\label{eq:pc}
f_{N,k}^t\rightharpoonup (\mu^t )^{\otimes k} \quad \text{as}  \ N\to \infty,
\end{equation}
where $f_{N,k}^t$ denotes the $k$-point marginal of $f_N^t$ and $\mu^t$ is a solution to the mean-field evolution\footnote{In \eqref{meanfield} and the remainder of the paper, we abuse the convolution notation by defining $\g\ast\mu(x) \coloneqq \int_{\R^\d}\g(x,y)d\mu(y)=\int_{\R^\d}\g(y,x)d\mu(y)$, since $\g$ is assumed to be symmetric.}
\begin{equation}\label{meanfield}
\begin{cases} 
\partial_t \mu^t - \div (( \nab \g*\mu^t  + \nabla V) \mu^t)=\frac{1}{\beta} \Delta \mu^t\\
\mu^t|_{t=0}= \mu^0.
\end{cases}
\end{equation}
The convergence \eqref{eq:pc} is for fixed $k$, as $N \to \infty$. There has been recent progress on understanding the optimal rate of this convergence in the context of the relative entropy method \cite{Lacker2023}. The modulated free energy method yields convergence in relative entropy, which in turn implies convergence of all the fixed marginals. Here, the (normalized) relative entropy is defined by 
\begin{equation}\label{relent}
H_N(f_N|g_N) \coloneqq \frac{1}{N} \int_{(\R^\d)^N} \log\left(\frac{f_N}{g_N}\right) df_N.
\end{equation}
There has also been progress on showing bounds for the relative entropy which vanish as $N\rightarrow\infty$ and hold uniformly in time, hence proving uniform-in-time propagation of chaos in  \cite{RS2021,GlBM2021, GlBM2023,CdCRS2023,LlF2023}. Informally, the distance between the laws $f_N^t, (\mu^t)^{\otimes N}$ does not grow arbitrarily large as time becomes large.

The notion of {\it generation of chaos}, a term coined recently by Lukkarinen \cite{Lukkarinen2023}, consists in a similar convergence as time gets large, even when the initial data $f_N^0$ is not tensorized, i.e. does not exhibit chaos or independence. Interpreted in an entropic sense (see \cite{HM2014} for a discussion of various notions of chaos), we have  generation of chaos if $H_N(f_N^t |(\mu^t) ^{\otimes N}) \to 0 $ as $t \to \infty$, uniformly in $N$, and without any smallness assumptions on $H_N(f_N^0|(\mu^0)^{\otimes N})$. This is what we wish to demonstrate here holds, under  a {\it uniform-in-N modulated LSI} condition, that we will define below. 

\subsection{Modulated energies and modulated Gibbs measures}\label{sec1.1}
Before going further, let us review the notion of modulated energy. This object was first introduced as a next-order electric energy in \cite{SS2015,RS2016,PS2017} and used in the dynamics context as a modulated energy in \cite{Duerinckx2016,Serfaty2020} and following works---in the spirit of \cite{Brenier2000}.
Given a probability density $\mu$ on $\R^\d$,
we define the modulated energy of the configuration $\XN$ as 
\begin{equation}\label{defF}
F_N(\XN, \mu) \coloneqq \hal \int_{(\R^{\d})^2\setminus\triangle} \g(x,y) d\( \frac1N\sum_{i=1}^N \delta_{x_i}- \mu\) (x) 
d\( \frac1N\sum_{i=1}^N \delta_{x_i}- \mu\) (y) ,
\end{equation}
where $\triangle$ denotes the diagonal in $(\R^\d)^2$. This is the total interaction of the system of $N$ discrete charges at $x_i$ against a negative (neutralizing) background charge $\mu$, with the self-interaction of the points, which is infinite if $\g(x,x)=\infty$,\footnote{If $\g(x,x)$ is finite, then the renormalization is unnecessary. See \cref{ssec:introApps} for elaboration.} removed. As shown in the aforementioned prior works, $F_N$ is not necessarily positive; however,  under appropriate assumptions on $\g$, it acts in effect as a squared distance between the empirical measure $\frac1N\sum_{i=1}^N \delta_{x_i}$ and $\mu$.
Given the density $\mu$, we can also define the {\it modulated Gibbs measure }
\begin{equation}\label{defQ}
\mathbb{Q}_{N,\beta}(\mu) \coloneqq \frac{1}{K_{N,\beta}(\mu)} e^{-\beta  N F_N(\XN, \mu)} d\mu(x_1) \dots d\mu(x_N),
\end{equation}
where 
\begin{equation}\label{defKNbe}
K_{N,\beta}(\mu) \coloneqq \int_{(\R^\d)^N} e^{-\beta N F_N(\XN, \mu)} d\mu(x_1) \dots d\mu(x_N)
\end{equation}
is the associated partition function. An example of use of such a modulated Gibbs measure is provided in \cite{AS2021} in the study of \eqref{gibbs} for the energy \eqref{energy} in the case where $\g$ is the Coulomb interaction.

Following for instance \cite{AS2021,AS2022}, we may introduce in the context of \eqref{energy} the {\it thermal equilibrium measure} $\mu_\beta$, which is defined as the minimizer among probability densities of the mean-field  free energy 
\begin{equation}\label{MFenergy}
\mathcal E_\beta(\mu) \coloneqq \hal \int_{(\R^\d)^2} \g(x,y) d\mu(x)d\mu(y)+ \int_{\R^\d} V(x) d\mu(x) + \frac{1}{\beta}\int_{\R^\d}\log \mu(x)d\mu(x).
\end{equation}
If $V$ grows sufficiently fast at infinity, then $\mathcal E_\beta$ has a unique minimizer, which is characterized by the existence of a constant $c_\beta\in\R$ such that
\begin{equation}
\label{charactem}
\g * \mu_\beta + V + \frac{1}{\beta} \log \mu_\beta=c_\beta \quad \text{in}  \ \R^\d.
\end{equation}
In the Coulomb case, \cite{AS2022} studied how $\mu_\beta$ converges to the usual equilibrium measure as $\beta \to \infty$.

The thermal equilibrium measure allows, as seen in \cite{AS2021}, for a nice {\it splitting of the energy} and thus of the Gibbs measure, as follows. For any $\XN$, using \eqref{defF}, \eqref{charactem}, and direct computations, we have 
\begin{equation}
\mathcal H_N(\XN)= N  \mathcal E_\beta(\mu_\beta) +  N F_N(\XN, \mu_\beta) - \frac{1}{\beta}\sum_{i=1}^N \log \mu_\beta(x_i). \end{equation}
Inserting this identity into \eqref{gibbs}, we find that 
\begin{equation}\label{gibbsrewrite}
d\PNbeta^V(\XN) = \frac{e^{-\beta N\mathcal E_\beta(\mu_\beta) } }{Z_{N,\beta}^V}  e^{-\beta N F_N(\XN, \mu_\beta)} d\mu_\beta(x_1)\dots d\mu_\beta(x_N).\end{equation}
In other words, comparing with \eqref{defQ}, we have found that 
\begin{equation}
\label{gibbsmod}
\PNbeta^V =  \mathbb{Q}_{N,\beta}(\mu_\beta) 
\end{equation}
and 
\begin{equation}
Z_{N,\beta}^V= K_{N,\beta}(\mu_\beta) e^{-\beta N \mathcal E_\beta(\mu_\beta)}.
\end{equation}
Thus, the Gibbs measure is itself a modulated Gibbs measure, relative to the thermal equilibrium measure. 

 Conversely, given a probability measure $\mu$, it is easy to see the modulated Gibbs measure $\mathbb{Q}_{N,\beta}(\mu)$ as a Gibbs measure  through a change of the confining potential. Following \eqref{charactem}, let
 \begin{equation}\label{defVmub}
 V_{\mu,\beta}\coloneqq - \g * \mu - \frac1{\beta} \log \mu.
 \end{equation}
Then retracing the steps of the splitting formula above, one has 
 \begin{equation}\label{QP}
 \mathbb{Q}_{N,\beta}(\mu)= \mathbb{P}_{N,\beta}^{V_{\mu, \beta}}.
 \end{equation}

With the rewriting \eqref{gibbsmod}, a crucial condition, appearing in all that follows,  is 
\begin{equation}\label{condK}
 |\log K_{N,\beta}(\mu) |=o(N) 
\end{equation}
 with a $o(N)$ uniform in $\beta\in[\hal \beta_0,2\beta_0]$, for some fixed $\beta_0$, which corresponds for instance to the ``large deviations estimates" in \cite{JW2018}.  We will call it a \emph{smallness of the free energy}.  This condition---and even a stronger quantitative one---can be  proven in the  Riesz cases \eqref{rieszcase} and for bounded continuous interactions. We give a short proof of this fact in the {appendix}. In the attractive log case, it is proven in \cite{BJW2020} and later streamlined in \cite{CdCRS2023a}.

In several cases of interest, including in particular \eqref{rieszcase} (cf. \cite{Serfaty2020,NRS2021}), $F_N$  is  positive up to a small additive constant 
and  controls a form of distance (e.g., a squared Sobolev norm). In such cases one may easily obtain a concentration estimate around $\mu$ as follows.\footnote{If this is true for  $-F_N$ instead of $F_N$, the same reasoning below applies, using $2\beta$ and $\beta$ instead of $\beta/2$ and $\beta$ in \eqref{expmom1}.} 
 By definition \eqref{defQ} of $\mathbb{Q}_{N,\beta}(\mu)$, we may rewrite the exponential moments of the modulated energy $F_N$ as 
 \begin{equation}\label{expmom1}
 \log \Esp_{\mathbb{Q}_{N,\beta}(\mu)}\left[ e^{\frac{\beta}{2} N F_N(\XN, \mu)} \right] =  \log \frac{K_{N,\beta/2}(\mu)}{K_{N,\beta}(\mu)}.\end{equation}
If  \eqref{condK} holds, we obtain the exponential moment control
 \begin{equation}
 \label{expmom3}\left| \log \Esp_{\mathbb{Q}_{N,\beta}(\mu)}  \left[ e^{\frac{\beta}{2}  N F_N(\XN, \mu) } \right]\right|\le o(N).
 \end{equation}
 Thus, using the almost positivity of  $F_N$ and the fact that it  controls a squared distance between the empirical measure and reference density,  this provides a concentration estimate around $\mu$ and implies a law of large numbers in the form 
\begin{align}
\mathbb{E}_{\mathbb{Q}_{N,\beta}(\mu)}\left[\left\|\frac{1}{N}\sum_{i=1}^N \delta_{x_i}-\mu\right\|^2\right] \rightarrow 0,
\end{align}
where $\|\cdot\|$ is a suitable norm. By standard arguments, this convergence also implies propagation of chaos for the statistical equilibrium $\mathbb{Q}_{N,\beta}(\mu)$ (see for instance \cite{RS2016,CD2021}): 
  \begin{align}\label{123}
\mathbb{Q}_{N,\beta}^{(k)}(\mu) \xrightharpoonup[]{} \mu^{\otimes k} \quad \text{as} \ N\to \infty,
\end{align}
where $\mathbb{Q}_{N,\beta}^{(k)}(\mu) $ denotes the $k$-point marginal of $\mathbb{Q}_{N,\beta}(\mu)$ and $k$ is fixed.

\subsection{Modulated free energy}
We may now define the modulated free energy, as introduced in \cite{BJW2019crm,BJW2019edp,BJW2020}. Given a reference probability density $\mu$ on $\R^\d$ as above and a probability density $f_N$ on $(\R^\d)^N$, the modulated free energy is defined by 
\begin{align}\label{eq:MFE}
E_N(f_N, \mu) \coloneqq \frac1\beta H_N(f_N\vert \mu^{\otimes N}) +\mathbb{E}_{f_N}\left[F_N(\XN,\mu)\right],
\end{align}
where $H_N$ is the relative entropy as in \eqref{relent} and $\mathbb{E}_{f_N}$ denotes the expectation with respect to the measure $f_N$, viewing the $\XN$ as a random variable. Let us  remark here that using the explicit form of \eqref{defQ}, the modulated free energy can be rewritten as 
\begin{equation}\label{rewritemodenergy}
E_N(f_N, \mu)=  \frac{1}{\beta} \( H_N(f_N| \mathbb{Q}_{N,\beta}(\mu))+ \frac{\log K_{N,\beta}(\mu)}{N}\).
\end{equation}
 In other words,  up to a constant related to the  smallness of free energy condition \eqref{condK},  the modulated free energy is another relative entropy. Note that this provides an easy proof of the 
 fact that $E_N(f_N, \mu)$ is essentially positive if the smallness of free energy condition
 \eqref{condK} holds. 
  Moreover, 
  controlling the relative entropy from $f_N$ to $\mathbb{Q}_{N,\beta}$ proves closeness of the particle density 
   to $\mathbb{Q}_{N,\beta}(\mu)$ and is, in reality, more precise than the mean-field limit and propagation of chaos provided by the control of $H_N(f_N \vert \mu^{\otimes N})$. As $t \to \infty$, the solution $\mu^t$ to \eqref{meanfield} converges to the thermal equilibrium measure $\mu_\beta$, and $\mathbb{Q}_{N,\beta}(\mu_\beta)$ is, as already noticed in \eqref{QP}, equal to $\mathbb{P}_{N,\beta}$, so we retrieve the fact, provided by usual LSI, that there is convergence in large time to $\mathbb{P}_{N,\beta}$, the invariant measure for the dynamics \eqref{eq:SDE}. See \cref{sec1.5} below for a further discussion on the advantages of $\mathbb{Q}_{N,\beta}(\mu)$ over $\mathbb{P}_{N,\beta}$.
   Finally, if one wishes to retrieve closeness of $f_N$ to $\mu^{\otimes N}$, one may either use a control of the 
  negative part of the modulated energy by the relative entropy, as ensured by condition \eqref{assmp2'} below, or use the concentration inequality via its consequence \eqref{123}.

\subsection{Evolution of modulated energy, Fisher information, and uniform  LSI}
The crucial computation of \cite{BJW2019crm, BJW2019edp, BJW2020} (performed on the torus, but the whole-space  with confining potential case is similar) is that when  differentiating in time $E_N(f_N^t, \mu^t)$, for $f_N^t$ solving the forward Kolmogorov equation and $\mu^t$ solving the mean-field evolution equation \eqref{meanfield}, a cancellation occurs, leading to
\begin{multline}\label{eq:MFEdiss}
\frac{d}{dt}E_N(f_N^t, \mu^t)  \leq  -\frac{1}{2} \int_{(\R^\d)^N}\int_{(\R^\d)^2\setminus\triangle} (u^t(x)-u^t(y))\cdot \nabla_1\g(x,y) d\left(\frac1N\sum_{i=1}^N\delta_{x_i} - \mu^t\right)^{\otimes 2}(x,y)d f_N^t \\
-\frac{1}{\beta^2 N}\int_{(\R^\d)^N}\sum_{i=1}^N \left|\nabla_i \log\paren*{\frac{f_N^t}{(\mu^t)^{\otimes N}}} +\frac{\beta}{N} \sum_{j\neq i}  \nabla_1 \g(x_i,x_j)  - \beta \nab \g * \mu^t (x_i) \right|^2df_N^t,
\end{multline}
where
\begin{align}
u^t \coloneqq  \frac1{\beta}\nabla\log\mu^t +\nabla V+ \nabla \g \ast \mu^t
\end{align}
is the velocity field associated to the mean-field dynamics \eqref{meanfield}.

At first pass, the  second term on the right-hand side of \eqref{eq:MFEdiss}, which is nonpositive,  may be discarded, and,  assuming $\g$ is translation-invariant, the first term in the right-hand side can be controlled, for instance in Riesz cases \eqref{rieszcase}   via the second author's inequality from \cite{Serfaty2020} and its refinements and generalizations \cite{NRS2021,RS2022}, by the  modulated energy itself, allowing to close a Gr\"{o}nwall loop.  When $V=0$, this is what is done in \cite{BJW2019edp} and revisited in \cite{CdCRS2023,CdCRS2023a}.  More precisely, the following type of inequality is used: for any sufficiently regular vector field $v$ and any pairwise distinct $\XN\in (\R^\d)^N$ ,
\begin{multline}\label{commutator}
\left|\int_{(\R^\d)^2\setminus\triangle} (v(x)-v(y))\cdot \nabla_1\g(x,y) d\left(\frac1N\sum_{i=1}^N\delta_{x_i} - \mu\right)^{\otimes 2}(x,y)\right|\\ \le C \|v\|_{*} \( F_N(\XN, \mu) +  o_N(1)\),
\end{multline}
where $\|v\|_{*}$ is some homogeneous Sobolev norm of $v$ and $o_N(1)$ depends only on (and is increasing with respect to) the $L^\infty$ norm of $\mu$ and vanishes as $N\rightarrow\infty$. This inequality was first proven in full generality in \cite{Serfaty2020} for all Coulomb/super-Coulombic Riesz potentials, following a previous work for the $\d=2$ Coulomb case \cite{LS2018}. A sharp additive error $o_N(1)= O( \|\mu\|_{L^\infty}^{\frac{s}{\d} } N^{\frac{s}{\d}-1})$ with $\|v\|_{*} = \|\nabla v\|_{L^\infty}$ was proven in \cite{RS2022}, following earlier Coulomb results \cite{LS2018, Serfaty2023, Rosenzweig2021ne}. The estimate \eqref{commutator} was generalized to Riesz-like kernels in \cite{NRS2021}. For $\g$ satisfying $\left|(x-y)\cdot\nabla_1\g(x,y)\right| \leq C$, one may extract from \cite{JW2018}, as was done in \cite{BJW2020}, the averaged inequality
\begin{multline}\label{commutator'}
\left|\int_{(\R^\d)^N}\int_{(\R^\d)^2\setminus\triangle} (v(x)-v(y))\cdot \nabla_1 \g(x,y) d\left(\frac1N\sum_{i=1}^N\delta_{x_i} - \mu\right)^{\otimes 2}(x,y)df_N\right|\\
\leq \|\nabla v\|_{L^\infty}\left(C_1 H_N(f_N \vert \mu^{\otimes N}) + \frac{C_2}{N}\right).
\end{multline}

Let us now examine the nonpositive term in the right-hand side of \eqref{eq:MFEdiss}. We rewrite it 
as 
\begin{align}\label{rewriting}
&-\frac{1}{\beta^2 N}\int_{(\R^\d)^N}\sum_{i=1}^N \left|\nabla\log\paren*{\frac{f_N^t}{(\mu^t)^{\otimes N}}} + \frac{\beta}{N} \sum_{j\neq i}  \nabla_1 \g(x_i,x_j)  -  \nab \g * \mu^t (x_i) \right|^2df_N^t \nn\\
&= - \frac{1}{\beta^2 N} \int_{(\R^\d)^N }\left|\nab \log \frac{f_N^t}{\mathbb{Q}_{N,\beta} (\mu^t) } \right|^2 df_N^t = - \frac{1}{\beta^2 N} \int_{(\R^\d)^N}   \left| \nab \sqrt{ \frac{f_N^t}{\mathbb{Q}_{N,\beta} (\mu^t) } } \right|^2 d \mathbb{Q}_{N,\beta}(\mu^t) .
\end{align}
Indeed, one may check that by definition \eqref{defQ} of $\mathbb{Q}_{N,\be}(\mu)$,
\begin{align}
\nab_i \log \mathbb{Q}_{N,\beta}(\mu) = -\beta N \nab_i F_N(\XN, \mu) +  \nab \log \mu(x_i),
\end{align}
and  in view of the definition \eqref{defF} of $\Fr_N(\XN,\mu)$,
\begin{align}
\nab_i F_N (\XN,\mu) = \frac{1}{N^2}  \sum_{1\leq j\leq N: j\neq i} \nab_1 \g(x_i,x_j) - \frac{1}{N} \nab (\g* \mu) (x_i).
\end{align}
For any $f_N$ and any reference probability density $\mu$, we  call
the quantity 
\begin{equation}\label{modfish}
\frac{1}{N} \int_{(\R^\d)^N } \left |\nab \sqrt{   \frac{f_N}{\mathbb{Q}_{N,\beta} (\mu) } }\right|^2 d \mathbb{Q}_{N,\beta}(\mu)
\end{equation}
the {\it modulated Fisher information},  which is nothing but the normalized relative Fisher information $I_N(f_N \vert \mathbb{Q}_{N,\be}(\mu))$, and the relation \eqref{eq:MFEdiss} transforms into 
 \begin{multline}\label{MFEdiss2}
\frac{d}{dt}E_N(f_N^t, \mu^t)  \leq -\frac{1}{\beta^2 N}  \int_{(\R^\d)^N } \left |\nab \sqrt{   \frac{f_N^t}{\mathbb{Q}_{N,\beta} (\mu^t) } }\right|^2 d \mathbb{Q}_{N,\beta}(\mu^t)
\\
 -\frac{1}{2} \int_{(\R^\d)^N}\int_{(\R^\d)^2\setminus\triangle} (u^t(x)-u^t(y))\cdot \nabla_1\g(x,y) d\left(\frac1N\sum_{i=1}^N\delta_{x_i} - \mu^t\right)^{\otimes 2}(x,y)d f_N^t . \end{multline}

The goal is then to exploit a functional inequality relating the modulated Fisher information to the modulated free energy to take advantage of the negative term in \eqref{MFEdiss2}.

\begin{mydef}\label{def:ULSI}
We say that a family of probability measures $\{P_N\}_{N\geq 1}$ satisfies a {\it uniform logarithmic Sobolev inequality} (LSI) if there exists a constant  $C_{LS}>0$, such that for any $N\geq 1$ and $f \in C^1((\R^\d)^N)$, we have 
\begin{equation}\label{eq:LSI} 
\int_{(\R^\d)^N} f^2 \log \frac{f^2}{\int f^2 dP_N} dP_N \le C_{LS} \int_{(\R^\d)^N} |\nab f|^2 dP_N.
\end{equation}
Given data $(\g,V,\beta)$, we say that a \emph{uniform $\mu$-modulated LSI ($\mu$-LSI)} holds if the family of probability measures $\{\mathbb{Q}_{N,\beta}(\mu)\}_{N\geq 1}$ of the form \eqref{defQ} satisfies a uniform LSI.
\end{mydef}

Our main observation is that if $\mathbb{Q}_{N,\beta}(\mu)$ satisfies a uniform LSI, then applying \eqref{eq:LSI} to $f= \sqrt{ \frac{f_N }{\mathbb{Q}_{N,\beta} (\mu)}}$, with $f_N$ a probability density on $(\R^\d)^N$, we find 
\begin{equation}
\int_{(\R^\d)^N} \left|\nab \sqrt { \frac{f_N }{\mathbb{Q}_{N,\beta} (\mu)}  }\right|^2  d\mathbb{Q}_{N,\beta}(\mu) 
\ge \frac{1}{C_{LS} } \int_{(\R^\d)^N}   \log\Bigg(\frac{ \frac{f_N }{\mathbb{Q}_{N,\beta} (\mu)}    } {\int_{(\R^\d)^N } \frac{f_N }{\mathbb{Q}_{N,\beta} (\mu)}d\mathbb{Q}_{N,\beta}(\mu) }\Bigg)df_N .
\end{equation}
Using that $f_N$ is a probability density, we recognize on the right-hand side  $N H_N(f_N\vert \mathbb{Q}_{N,\beta}(\mu))$. In light of \eqref{rewritemodenergy}, we then have 
\begin{equation}\label{fishmoden}
\frac1N \int_{(\R^\d)^N} \left|\nab \sqrt { \frac{f_N }{\mathbb{Q}_{N,\beta} (\mu)}  }\right|^2  d\mathbb{Q}_{N,\beta}(\mu)
\ge  \frac{1}{C_{LS}} \(  \beta E_N(f_N, \mu) - \frac1N \log K_{N,\beta}(\mu)\).
\end{equation}
In other words,  a uniform LSI for $\mathbb{Q}_{N,\beta} (\mu)$ implies that the modulated Fisher information is bounded below by the modulated free energy  and an additive error that is $o_N(1)$ assuming smallness of free energy. If \eqref{fishmoden} holds for all $\mu^t$ along the flow, then  it can be inserted into \eqref{MFEdiss2} to obtain an exponential decay of the modulated  free energy, provided \eqref{commutator} or \eqref{commutator'} holds. 

 In \cite{GlBM2021}, in the context of conservative dynamics on the torus $\T^d$ (see remarks at the end of \cref{ssec:introMR}), a uniform LSI is used in the context of the relative entropy method \cite{JW2018}. In that method,  one differentiates in time $H_N(f_N^t\vert (\mu^t)^{\otimes N})$ instead of \eqref{rewritemodenergy}, leading to a Fisher information relative to the reference measure $(\mu^t)^{\otimes N}$ instead of $\mathbb{Q}_{N,\beta}(\mu^t)$. Proving the needed uniform LSI holds is straightforward, as it follows from upper and lower bounds on $\mu^t$ (a consequence of maximum principle and only possible on compact domains) and the Holley-Stroock perturbation lemma. See also \cite{LlF2023} for a similar idea applied to the hierarchal relative entropy method of \cite{Lacker2023}.

\subsection{Main result}\label{ssec:introMR}
 To present the main result of this note, we list some assumptions that we make on the potential $\g: (\R^\d)^2\rightarrow [-\infty,\infty]$. We will explain below specific cases in which these assumptions hold.

\begin{enumerate}[(i)]
\item\label{assmp1}
$\g \in C^2((\R^\d)^2\setminus\triangle)$ is symmetric and for some $s<\d$, satisfies
\begin{align}
|\g(x,y)| \leq C\begin{cases}1+\left|\log|x-y|\right|, & {s=0} \\ 1+ |x-y|^{-s}, & {s>0} \end{cases}
\end{align}
for some constant $C>0$.
\item\label{assmp2'}
There exists a constant $C_\beta \in [0,\frac{1}{\beta})$ such that for any $f_N \in \P_{ac}((\R^\d)^N)$ and $\mu \in \P(\R^\d) \cap L^\infty(\R^\d)$, with $\int_{\R^\d}\log(1+|x|)d\mu(x)<\infty$ if $s=0$,
\begin{align}
\mathbb{E}_{f_N}\left[\Fr_N(\XN,\mu)\right] \geq -C_\beta H_N(f_N \vert \mu^{\otimes N}) - o_N(1),
\end{align}
where $o_N(1)$ only depends (in an increasing fashion) on $\mu$ through $\|\mu\|_{L^\infty}$.
\item\label{assmp3}
There exist constants $C_{RE},C_{ME}\geq 0$, such that
\begin{multline}\label{commutator''}
\left|\int_{(\R^\d)^N}\int_{(\R^\d)^2\setminus\triangle} (v(x)-v(y))\cdot \nabla_1\g(x,y) d\left(\frac1N\sum_{i=1}^N\delta_{x_i} - \mu\right)^{\otimes 2}(x,y)df_N\right|\\
\leq \|v\|_{*}\left(C_{RE}H_N(f_N \vert \mu^{\otimes N}) + C_{ME}\mathbb{E}_{f_N}\left[\Fr_N(\XN,\mu)\right] + o_N(1)\right)
\end{multline}
for all pairwise distinct configurations $\XN\in(\R^\d)^N$, densities $f_N\in \P_{ac}((\R^\d)^N)$ and $\mu\in \P(\R^\d)\cap L^\infty(\R^\d)$, and continuous vector fields $v$ with finite homogeneous Sobolev norm $\|\cdot\|_*$ of some order.
\end{enumerate}

\begin{remark}\label{rem:assmps}
Assumption \eqref{assmp1} is to ensure that all energy expressions are well-defined and that all differential identities can be justified. Assumption \eqref{assmp2'} ensures that the modulated energy does overwhelm the relative entropy, which is not \emph{a priori} forbidden, since we make no sign assumptions on $\g$. Since $C_{\beta}<\frac{1}{\beta}$, it ensures that the modulated free energy is nonnegative up to $o_N(1)$ error. In fact, it shows that the modulated free energy controls the relative entropy. 
\end{remark}

Let us introduce the quantity
\begin{equation}\label{eq:EcNdef}
 \mathcal{E}_N^t \coloneqq E_N(f_N^t,\mu^t) + o_N^t(1)
 \end{equation}
 as a substitute for the modulated free energy. The additive error $o_N^t(1)$ is a constant multiple of the maximum of the additive errors in assumptions \eqref{assmp2'}, \eqref{assmp3} and ensures that $\mathcal{E}_N^t\geq 0$, which allows to perform a Gr\"onwall argument on this quantity. It depends only on $\mu^t$ through the $L^\infty$ norm, hence the $t$ superscript, and is increasing in this dependence. Also, it is easier to write the statements with $\mathcal{E}_N^t$, as these additive constants appear as the errors $o_N(1)$ in \eqref{commutator}. 
 
\begin{thm}\label{thm:main}
Let $\beta>0$. Assume that equation \eqref{meanfield} admits a solution $\mu \in C([0,\infty), \mathcal{P}(\R^\d) \cap L^\infty(\R^\d))$, such that  $\|\mu^t\|_{L^\infty}$ is bounded uniformly in $t$ and $\nab u^t\in L^\infty$ locally uniformly in $t$. If $s=0$, further assume that $\int_{\R^\d}\log(1+|x|)d\mu^t<\infty$ for every $t\geq 0$. If $\mathbb{Q}_{N,\beta}(\mu^t)$ satisfies a uniform LSI with constant $C_{LS}>0$ for every $t\geq 0$, then 
\begin{multline}\label{eq:main}
\forall t\geq 0, \qquad \Ec_N^t \leq  e^{-\frac{4 t}{ \beta C_{LS}}+\int_0^t\frac{\Cs\| u^{\tau}\|_{*}}{2}d\tau}\Ec_N^0  \\
+e^{-\frac{4 t}{\beta C_{LS}}+\int_0^t\frac{\Cs\|u^\tau\|_{*}}{2}d\tau}\int_0^t e^{\frac{4\tau}{\beta C_{LS}}-\int_0^\tau\frac{\Cs\| u^{\tau'}\|_{*}}{2}d\tau'}\left[\dot{o}_N^\tau+ \frac{4}{\beta C_{LS}}\Bigg(o_N^\tau(1) - \frac{\log K_{N,\be}(\mu^\tau) }{\beta N} \Bigg)\right]d\tau,
\end{multline}
where $K_{N,\be}(\mu^\tau)$ is as in \eqref{defKNbe}, $o_N^\tau(1)$ is as above, and $\dot{o}_N^\tau(1)$ denotes the derivative of $o_N^\tau(1)$ with respect to time.
\end{thm}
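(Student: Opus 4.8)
The plan is to upgrade the dissipation inequality \eqref{MFEdiss2} to a closed differential inequality for $\Ec_N^t$ and integrate it by Grönwall. All the time differentiations are legitimate under assumption \eqref{assmp1} together with the stated regularity of $\mu^t$ and the fact that $f_N^t$, solving a nondegenerate Fokker--Planck equation, is smooth and strictly positive for $t>0$; in particular $f_N^t$ puts no mass on coincident configurations, so the pointwise-in-$\XN$ bounds \eqref{assmp2'} and \eqref{commutator''} may be integrated against $f_N^t$, while $\mu^t\in\mathcal P(\R^\d)\cap L^\infty$ along the flow by hypothesis. For the first, nonpositive term of \eqref{MFEdiss2}, I would write it (as in \eqref{rewriting}) as $-\tfrac{1}{\beta^2N}\int_{(\R^\d)^N}|\nab\log(f_N^t/\mathbb{Q}_{N,\beta}(\mu^t))|^2\,df_N^t=-\tfrac{4}{\beta^2N}\int_{(\R^\d)^N}|\nab\sqrt{f_N^t/\mathbb{Q}_{N,\beta}(\mu^t)}|^2\,d\mathbb{Q}_{N,\beta}(\mu^t)$ (using $\int|\nab\log h|^2\,h=4\int|\nab\sqrt h|^2$), and then apply the uniform LSI for $\mathbb{Q}_{N,\beta}(\mu^t)$, i.e.\ the lower bound \eqref{fishmoden} on the modulated Fisher information with $\mu=\mu^t$; here the uniformity of $C_{LS}$ in $N$ \emph{and} $t$ is essential. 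This bounds the term by $-\tfrac{4}{\beta C_{LS}}\big(E_N(f_N^t,\mu^t)+\tfrac{1}{\beta N}\log K_{N,\beta}(\mu^t)\big)$, and substituting $E_N(f_N^t,\mu^t)=\Ec_N^t-o_N^t(1)$ converts this into the genuine decay term $-\tfrac{4}{\beta C_{LS}}\Ec_N^t$ together with the additive contribution $\tfrac{4}{\beta C_{LS}}\big(o_N^t(1)-\tfrac{1}{\beta N}\log K_{N,\beta}(\mu^t)\big)$ of \eqref{eq:main}.

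For the second term of \eqref{MFEdiss2}, I would apply assumption \eqref{assmp3} (inequality \eqref{commutator''}) with $v=u^t$, which is admissible because $\nab u^t\in L^\infty$ locally uniformly, so $\|u^t\|_*<\infty$; this bounds it by $\tfrac12\|u^t\|_*\big(C_{RE}H_N(f_N^t\vert(\mu^t)^{\otimes N})+C_{ME}\,\mathbb{E}_{f_N^t}[F_N(\XN,\mu^t)]+o_N(1)\big)$. Assumption \eqref{assmp2'} gives $H_N(f_N^t\vert(\mu^t)^{\otimes N})\le(\tfrac1\beta-C_\beta)^{-1}\big(E_N(f_N^t,\mu^t)+o_N(1)\big)$, and since $\mathbb{E}_{f_N^t}[F_N(\XN,\mu^t)]=E_N(f_N^t,\mu^t)-\tfrac1\beta H_N(f_N^t\vert(\mu^t)^{\otimes N})$, it also controls the modulated energy by a multiple of $E_N$ up to $o_N(1)$. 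Enlarging the constant in the definition of $o_N^t(1)$ (permissible, since the $o_N(1)$'s in \eqref{assmp2'} and \eqref{commutator''} depend on $\mu^t$ only through $\|\mu^t\|_{L^\infty}$, increasingly) lets one absorb all of these $o_N(1)$'s; one concludes that the second term of \eqref{MFEdiss2} is at most $\tfrac{\Cs\|u^t\|_*}{2}\Ec_N^t$ for a constant $\Cs$ depending only on $\beta,C_\beta,C_{RE},C_{ME}$. The point of enlarging $o_N^t(1)$ is precisely that no $\|u^t\|_*$-weighted additive error remains.

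Combining the two bounds with $\tfrac{d}{dt}\Ec_N^t=\tfrac{d}{dt}E_N(f_N^t,\mu^t)+\dot o_N^t$ gives
\begin{equation*}
\frac{d}{dt}\Ec_N^t\ \le\ \Big(-\frac{4}{\beta C_{LS}}+\frac{\Cs\|u^t\|_*}{2}\Big)\Ec_N^t\ +\ \dot o_N^t\ +\ \frac{4}{\beta C_{LS}}\Big(o_N^t(1)-\frac{\log K_{N,\beta}(\mu^t)}{\beta N}\Big).
\end{equation*}
Since $\Ec_N^t\ge0$ — which is exactly what the choice of $o_N^t(1)$ and assumption \eqref{assmp2'} with $C_\beta<\tfrac1\beta$ guarantee — I would then apply Grönwall's lemma with the integrating factor $\exp\!\big(\int_0^t(\tfrac{4}{\beta C_{LS}}-\tfrac{\Cs\|u^\tau\|_*}{2})\,d\tau\big)$, which is finite for every $t$ because $\tau\mapsto\|u^\tau\|_*$ is locally bounded, and integrate from $0$ to $t$ to obtain \eqref{eq:main}.

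I expect the real work to be in the bookkeeping of the second step rather than in any single estimate: one must check that enlarging the constant in $o_N^t(1)$ genuinely allows the commutator term to be written as $\tfrac{\Cs\|u^t\|_*}{2}\Ec_N^t$ with no residual term carrying a factor of $\|u^t\|_*$, while simultaneously keeping $\Ec_N^t\ge0$. A secondary point requiring care is that $E_N(f_N^t,\mu^t)$ itself need not be nonnegative, which is why the Grönwall argument is run on $\Ec_N^t$, and why assumption \eqref{assmp2'} — the modulated free energy controlling the relative entropy up to $o_N(1)$ — is indispensable.
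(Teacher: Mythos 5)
Your proposal is correct and follows essentially the same route as the paper: apply the uniform LSI through \eqref{fishmoden} to the dissipation term in \eqref{MFEdiss2}, bound the commutator term via assumption \eqref{assmp3} with $v=u^t$ combined with assumption \eqref{assmp2'} to absorb $C_{RE}H_N+C_{ME}\mathbb{E}_{f_N^t}[F_N]$ and the additive errors into $\Ec_N^t$, and then run Gr\"onwall on $\Ec_N^t$ with the integrating factor $e^{\int_0^t(\frac{4}{\beta C_{LS}}-\frac{\Cs\|u^{\tau}\|_*}{2})d\tau}$. Your explicit bound $H_N\leq(\frac1\beta-C_\beta)^{-1}(E_N+o_N(1))$ plays the role of the paper's case analysis on $C_{RE}/C_{ME}$, and your bookkeeping of the enlarged $o_N^t(1)$ matches the paper's definition \eqref{eq:EcNdef}.
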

We see here that provided  $\int_0^\infty\|u^\tau\|_{*}d\tau <\infty$, the first term on the right-hand side converges exponentially fast to $0$ as $t\rightarrow\infty$, while the second term is $o_N(1)$ uniformly bounded in $t$, assuming $\log K_{N,\be}(\mu^\tau)=o(N)$ uniformly in $\tau$ and that $\int_0^\infty |\dot{o}_N^\tau(1)|d\tau<\infty$, by the fundamental theorem of calculus and our assumption that $\|\mu^t\|_{L^\infty}$ is uniformly bounded. Since $\mathcal E_N$ differs from $E_N$ only by additive constants which are $o_N(1)$, and the modulated free energy $E_N$ controls the relative entropy $H_N$, as explained in \cref{rem:assmps}, it follows that the estimate \eqref{eq:main} implies entropic generation of chaos and also gives a uniform-in-time propagation of chaos if the initial data is such that $\mathcal E_N^0=o_N(1)$. In the next subsection, we give cases of interest to which \cref{thm:main} applies.

Generation of chaos for potentials $\g$ with $\nabla\g$ in $L^\infty$, which does not allow for singular potentials, was shown in \cite{LlF2023}, with a rate of convergence in $N$ that is sharp for relative entropy, under smallness assumptions on $\beta$. In \cite{GlBM2021}, a generation of chaos result was shown for \emph{conservative} dynamics (replace $\nabla$ with $\M\nabla$ for an antisymmetric matrix $\M$) with $\g$ having a log-type singularity. Both \cite{LlF2023, GlBM2021} are restricted to the torus $\T^\d$. A weaker generation of chaos result in $2$-Wasserstein distance was shown in \cite{GlBM2023} for the Riesz case on $\R$ with uniformly convex confinement via coupling methods. We mention that convergence in relative entropy implies convergence in $W_2$ by a theorem of Otto-Villani \cite{OV2000}.

\begin{remark}\label{rem:decay}
The long-time analysis of equation \eqref{meanfield} that allows to show in the Riesz case that $\int_0^\infty \|\nabla u^\tau\|_{L^\infty}d\tau < \infty$ and $K_{N,\be}(\mu^\tau)=o(N)$ uniformly in $\tau$ is the subject of forthcoming work with J. Huang \cite{HRS2022}.  In fact, this work shows that solutions converge as $t\rightarrow\infty$ to the thermal equilibrium $\mu_\beta$ in a strong sense at a quantifiable rate and even covers the case of $\R^\d$ without confinement, which has been an outstanding problem.
\end{remark}

\begin{remark}\label{rem:torus}
One could also consider the periodic setting $\T^\d$, as in \cite{BJW2019edp, BJW2020, GlBM2021, CdCRS2023}. But the case of $\R^{\d}$ is mathematically more interesting.
\end{remark}

In the case where $\d=1$ and $\g(x)=-\log|x|$ or $|x|^{-s}$ for $s\in (0,1)$, $V$ is a $C^2$ uniformly convex potential (e.g., $V(x)=|x|^2$), and $\mu$ is a probability density which is not too far from the thermal equilibrium $\mu_\beta$, we are able to verify a uniform $\mu$-modulated LSI. The general $\d$-dimensional Riesz case is challenging: it is at least as difficult as the uniform LSI for $\mathbb{P}_{N,\be}^V$, which is a well-known open problem.

\subsection{Applications}\label{ssec:introApps}
We can give a more precise form of the estimate \eqref{eq:main} in the repulsive singular Riesz case \eqref{rieszcase} so that $(-\Delta)^{\frac{\d-s}{2}}\g = \mathsf{c}_{\d,s}\delta_0$. One has that 
\begin{align}\label{eq:MElbRiesz}
\Fr_N(\ux_N,\mu) \geq -\begin{cases} \frac{\log(N\|\mu\|_{L^\infty})}{2N\d}\indic_{s=0} + \Cs\|\mu\|_{L^\infty}^{\frac{s}{\d}}N^{\frac{s}{\d}-1}, & {s\geq\d-2} \\ \frac{\mathsf{C}\log(N\|\mu\|_{L^\infty})}{N}\indic_{s=0} + \mathsf{C}\|\mu\|_{L^\infty}^{\frac{s}{\d}} N^{-\frac{2(\d-s)}{2(\d-s)+s(\d+2)}}, &{s<\d-2}.\end{cases}
\end{align}
Here, $\mathsf{C}>0$ is an absolute constant. The additive errors for the sub-Coulomb case $s<\d-2$ are expected to be suboptimal, while they are sharp in the Coulomb/super-Coulomb case $s\geq\d-2$.\footnote{The $L^\infty$ condition here---and by implication, the $L^\infty$ condition in \cref{thm:main}---can be relaxed quite a bit (e.g., see \cite{Rosenzweig2022,Rosenzweig2022a}) at the cost of increasing the additive errors; but we will not concern ourselves with such generality.} For details, we refer to \cite{RS2021} in the case $s<\d-2$ and \cite{CdCRS2023,RS2022} in the case $s\geq \d-2$. In particular, \eqref{eq:MElbRiesz} shows that
\begin{align}
E_N(f_N,\mu) \geq \frac{1}{\beta}H_N(f_N \vert \mu^{\otimes N}) -\begin{cases} \frac{\log(N\|\mu\|_{L^\infty})}{2N\d}\indic_{s=0} + \Cs\|\mu\|_{L^\infty}^{\frac{s}{\d}}N^{\frac{s}{\d}-1}, & {s\geq\d-2} \\ \frac{\mathsf{C}\log(N\|\mu\|_{L^\infty})}{N}\indic_{s=0} + \mathsf{C}\|\mu\|_{L^\infty}^{\frac{s}{\d}} N^{-\frac{2(\d-s)}{2(\d-s)+s(\d+2)}}, &{s<\d-2}.\end{cases}
\end{align}
 We take
\begin{equation}\label{eq:EcNdefRiesz}
 \mathcal{E}_N^t \coloneqq  E_N(f_N^t,\mu^t) + \begin{cases} \frac{\log(N\|\mu^t\|_{L^\infty})}{2N\d}\indic_{s=0} + \Cs\|\mu^t\|_{L^\infty}^{\frac{s}{\d}}N^{\frac{s}{\d}-1}, & {s\geq\d-2} \\ \frac{\mathsf{C}\log(N\|\mu^t\|_{L^\infty})}{N}\indic_{s=0} + \mathsf{C}\|\mu^t\|_{L^\infty}^{\frac{s}{\d}} N^{-\frac{2(\d-s)}{2(\d-s)+s(\d+2)}}, &{s<\d-2}\end{cases}
 \end{equation}
The estimate \eqref{commutator''} holds with $C_{RE}=0$,
\begin{align}
\|v\|_{*} = \begin{cases} \|\nabla v\|_{L^\infty}, & {s\geq \d-2} \\ \|\nabla v\|_{L^\infty} + \|(-\Delta)^{\frac{\d-s}{4}}v\|_{L^{\frac{2\d}{\d-2-s}}}, & {s<\d-2}, \end{cases}
\end{align}
and
\begin{align}
o_N^t(1) = \begin{cases}\frac{\log(N\|\mu^t\|_{L^\infty})}{2N\d}\indic_{s=0} + \Cs\|\mu^t\|_{L^\infty}^{\frac{s}{\d}}N^{\frac{s}{\d}-1}, & {s\geq\d-2}\\ \\ \|(-\Delta)^{\frac{s+1-\d}{2}}\mu^t\|_{L^\infty} N^{-\frac{s+1+\frac{(2(\d-s)}{\d+2}}{\left(s+\frac{(2(\d-s)}{\d+2}\right)(1+s)}} + \|\mu^t\|_{L^\infty}^{\frac{2+s}{\d+2}}N^{-\frac{\frac{(2(\d-s)}{\d+2}}{\left(s+\frac{(2(\d-s)}{\d+2}\right)(1+s)}}, & {s<\d-2}. \end{cases}
\end{align}
In the attractive log case on the torus $\T^\d$, it is shown in \cite{CdCRS2023a} (building on \cite{BJW2020}) that there exists $\beta_\d >0$ such that for any $0\leq\beta<\beta_\d$, there are constants $\ep_\beta\in (0,1)$ and $C_\beta>0$ such that 
\begin{align}
\beta\mathbb{E}_{f_N}\left[\Fr_N(\XN,\mu) \right] \leq \ep_\beta H_N(f_N \vert \mu^{\otimes N}) + \frac{C_\beta}{N}.
\end{align}
Therefore, assumption \eqref{assmp2'} is satisfied. The conjectured optimal value of $\beta_\d$ is $2\d$ (e.g., $\beta_\d=4$ in the $\d=2$ case, which corresponds to the Patlak-Keller-Segel model). It is shown in \cite{CdCRS2023a} that that $\beta_\d\leq 2\d$ and further that if $\d=2$, then $\beta_\d=2\d$ provided one restricts to densities $\mu$ sufficiently close to the uniform measure.

For potentials $\g:(\R^\d)^2\rightarrow\R$ that are continuous along the diagonal, one can skip the renormalization and simply define
\begin{align}
\Fr_N(\XN,\mu) = \int_{(\R^\d)^2}\g(x,y)d\left(\frac1N\sum_{i=1}^N\delta_{x_i}-\mu\right)(x)d\left(\frac1N\sum_{i=1}^N\delta_{x_i}-\mu\right)(y).
\end{align}
If $\g$ is repulsive in the sense that $\g(x,y)$ is the integral kernel of a positive semidefinite operator on the space of finite Borel measures, as in the case of the equations used for neural networks parameters evolution \cite{MMM2019,RvE2022,CB2018}, then $\Fr_N(\XN,\mu)\geq 0$.

Continuing to assume that $\g$ is continuous at the origin, but dropping the repulsive assumption, we may use the Donsker-Varadhan lemma to estimate
\begin{align}
\mathbb{E}_{f_N}\left[\Fr_N(\XN,\mu)\right] \leq \frac{1}{\eta}\left(H_N(f_N\vert\mu^{\otimes N})  +\frac{1}{N}\log\mathbb{E}_{\mu^{\otimes N}}\left[e^{N\eta\Fr_N(\XN,\mu)}\right]\right)
\end{align}
for any $\eta>0$. If $\g\in L^\infty$, then one may use \cite[Theorem 4]{JW2018} (see also \cite[Section 5]{LLN2020} for a simpler proof) with
\begin{align}
\phi(x,z) \coloneqq \left(\g(x,z) - \int_{\R^\d}\g(x,y)d\mu(y) - \int_{\R^\d}\g(y,z)d\mu(y) + \int_{(\R^\d)^2}\g(y,y')d\mu(y)d\mu(y')\right).
\end{align}
The conclusion is that if $\sqrt{C_0}\eta\|\phi\|_{L^\infty}<1$, where $C_{0}$ is a universal constant, then
\begin{align}
\log\mathbb{E}_{\mu^{\otimes N}}\left[e^{N\eta\Fr_N(\XN,\mu)}\right] \leq \log\left(\frac{2}{1-C_0\eta^2\|\phi\|_{L^\infty}^2}\right).
\end{align}
Replacing $F_N$ by $-F_N$ and repeating the preceding reason, we then find that
\begin{align}
\left|\mathbb{E}_{f_N}\left[\Fr_N(\XN,\mu)\right]\right| \leq \frac{1}{\eta}H_N(f_N\vert\mu^{\otimes N}) + \frac{1}{\eta N}\log\left(\frac{2}{1-C_0\eta^2\|\phi\|_{L^\infty}^2}\right).
\end{align}
If $\frac{1}{\beta}>\frac{1}{\sqrt{C_0}\|\phi\|_{L^\infty}}$, then we may choose $\frac{1}{\eta}\in (\frac{1}{\sqrt{C_0}\|\phi\|_{L^\infty}}, \frac{1}{\beta})$, implying that assumption \eqref{assmp2'} holds. 

\subsection{Advantages of modulated LSI over LSI}\label{sec1.5}
Let us explain the advantage of a uniform modulated LSI over merely a uniform LSI for $\mathbb{P}_{N,\be}$. For simplicity, let us assume that $\g$ is translation-invariant. Ignoring regularity questions,
\begin{align}
\frac{d}{dt}H_N\left(f_N^t \vert \mathbb{P}_{N,\be}\right) = -\frac{1}{\beta} I_N(f_N^t \vert \mathbb{P}_{N,\beta}),
\end{align}
where we recall that $I_N$ is the normalized relative Fisher information. If there is a uniform LSI constant $C_{LS}$ for $\mathbb{P}_{N,\be}$, then by Gr\"{o}nwall's lemma,
\begin{align}
H_N\left(f_N^t \vert \mathbb{P}_{N,\be}\right) \leq e^{-\frac{t}{C_{LS}\beta}}H_N\left(f_N^0 \vert \mathbb{P}_{N,\be}\right).
\end{align}
By subadditivity of relative entropy and Pinsker's inequality, for any fixed $1\leq k\leq N$,
\begin{align}
\left\|f_{N,k}^t - \mathbb{P}_{N,\be}^{(k)}\right\|_{TV}^2 \leq 2k e^{-\frac{t}{C_{LS}\beta}} H_N\left(f_N^0 \vert \mathbb{P}_{N,\be}\right).
\end{align}
If $\mu^t$ is a solution of the mean-field evolution \eqref{meanfield}, then
\begin{align}\label{eq:dissmfFE}
\frac{d}{dt}\left[\Ec_{\beta}(\mu^t) - \Ec_{\beta}(\mu^\beta) \right] = -\int_{\R^\d}\left|\frac{1}{\beta}\nabla\log\mu^t + \nabla\g\ast\mu^t + \nabla V\right|^2 d\mu^t,
\end{align}
where $\mathcal{E}_{\beta}$ is the mean-field free energy as defined in \eqref{MFenergy}. 
One may check by direct computation that
\begin{align}
\lim_{N\rightarrow\infty} \frac{1}{\beta} H_N(\mu^{\otimes N} \vert \mathbb{P}_{N,\beta}) = \mathcal{E}_{\beta}(\mu) - \mathcal{E}_{\beta}(\mu_\beta)
\end{align}
and
\begin{align}
\lim_{N\rightarrow\infty} \frac{1}{\beta}I_N(\mu^{\otimes N} \vert \mathbb{P}_{N,\beta}) = \beta\int_{\R^\d}\left|\frac{1}{\beta}\log\mu +\nabla V + \nabla\g\ast\mu\right|^2d\mu,
\end{align}
which together with the uniform LSI for $\mathbb{P}_{N,\beta}$ imply the infinite-volume LSI
\begin{align}
\beta\left[ \mathcal{E}_{\beta}(\mu) - \mathcal{E}_{\beta}(\mu_\beta)\right] = \lim_{N\rightarrow\infty} H_N(\mu^{\otimes N} \vert \mathbb{P}_{N,\beta}) &\leq \lim_{N\rightarrow\infty} C_{LS}I_N(\mu^{\otimes N} \vert \mathbb{P}_{N,\beta}) \nn\\
&= C_{LS}\beta^2\int_{\R^\d}\left|\frac{1}{\beta}\log\mu +\nabla V + \nabla\g\ast\mu\right|^2d\mu.
\end{align}
Inserting this inequality into the right-hand side of \eqref{eq:dissmfFE} and applying Gr\"onwall again,
\begin{align}
\left[\Ec_{\beta}(\mu^t) - \Ec_{\beta}(\mu^\beta) \right] \leq e^{-\frac{t}{C_{LS}\beta}}\left[\Ec_{\beta}(\mu^0) - \Ec_{\beta}(\mu^\beta) \right].
\end{align}
Using \eqref{charactem} and direct computation, one may also check that
\begin{align}
\mathcal{E}_{\beta}(\mu) - \mathcal{E}_\beta(\mu_\beta) = \frac{1}{\beta}\int_{\R^\d}\log\left(\frac{\mu}{\mu_\beta}\right)d\mu + \frac12\int_{(\R^\d)^2}\g(x-y)d(\mu-\mu_\beta)^{\otimes 2}(x,y).
\end{align}
Assuming, say, that $\hat{\g}\geq 0$, we may discard the potential energy term and then apply Pinsker's inequality again to obtain
\begin{align}
\left\|\mu^t-\mu_\be\right\|_{TV}^2 \leq 2\beta e^{-\frac{t}{C_{LS}\beta}}\left[\Ec_{\beta}(\mu^0) - \Ec_{\beta}(\mu^\beta) \right].
\end{align}
Considering just the case $k=1$ to simplify the analysis, we have by triangle inequality that
\begin{align}
\left\|f_{N,1}^t - \mu^t\right\|_{TV} &\leq \left\|f_{N,1}^t - \mathbb{P}_{N,\beta}^{(1)}\right\|_{TV} + \left\| \mathbb{P}_{N,\beta}^{(1)} - \mu_\beta \right\|_{TV} + \left\| \mu^t- \mu_\beta \right\|_{TV}\nn\\
&\leq \sqrt{2e^{-\frac{t}{C_{LS}\beta}} H_N\left(f_N^0 \vert \mathbb{P}_{N,\be}\right)} + \sqrt{2\beta e^{-\frac{t}{C_{LS}\beta}}\left(\Ec_{\beta}(\mu^0) - \Ec_{\beta}(\mu^\beta) \right)} + \left\| \mathbb{P}_{N,\beta}^{(1)} - \mu_\beta \right\|_{TV}. \label{eq:TIrhs}
\end{align}
Supposing that\footnote{Such a bound is known (with a sharp estimate for $o_{N}(1)$), for instance, in the high-temperature case where $\g$ has bounded gradient \cite{Lacker2022Gibbs}.}
\begin{align}
\left\|\mathbb{P}_{N,\beta}^{(1)}-\mu_\beta\right\|_{TV} = o_{N}(1),
\end{align}
the right-hand side of \eqref{eq:TIrhs} tends to zero as $t\rightarrow\infty$ and $N\rightarrow\infty$. But this estimate does not imply propagation of chaos, even locally in time, as the second term does not vanish as $N\rightarrow\infty$. To address this unsatisfactory feature, one would also need a local-in-time estimate with which to interpolate, say, of the form
\begin{align}
\left\|f_{N,1}^t - \mu^t\right\|_{TV} \leq e^{Ct}o_N(1),
\end{align}
where $o_N(1)$ vanishes as $N\rightarrow\infty$ assuming some form of chaos for the initial data.

The above described argument is rather inefficient. We had to pass from relative entropy to a genuine metric, total variation distance, to implement this triangle inequality argument. In doing so, one loses the optimality of the rate in $N$ \cite{Lacker2023}. Moreover, by trying to balance $t$ and $N$, the rate of convergence further deteriorates. In contrast, a uniform modulated LSI addresses propagation/generation of chaos in one swoop, because it is dynamic: it not only depends on $N$ but also allows for dependence on $t$ through the flowing of $\mu$ according to \eqref{meanfield}.

We mention that this classical triangle inequality/interpolation idea was used in \cite{DGPS2023} for energies with regular interactions, except with total variation distance replaced by $2$-Wasserstein distance, which works just as well since LSI implies a Talagrand inequality \cite{OV2000}. Though, only a statement of uniform-in-time propagation of chaos (with suboptimal rate), as opposed to generation of chaos, is presented in \cite{DGPS2023}.


\subsection{Organization of the paper}
Let us conclude the introduction with some remarks on the organization of the body of the paper. In \cref{sec:ProofMT}, we give the details of the proof of the main result, \cref{thm:main}. Then in \cref{sec:LSI}, we turn to proving that a uniform modulated LSI holds in the log/Riesz case for a certain class of densities $\mu$ in dimension $\d=1$.

\subsection{Acknowledgments}
The authors thank Djalil Chafa\"{i} for helpful discussion and references. The second author also acknowledges the Fondation Sciences Math\'ematiques de Paris and PSL Research University who supported her visit to ENS-PSL, where this work was completed.

\section{Proof of the main theorem}\label{sec:ProofMT}
Applying the uniform LSI for $\mathbb{Q}_{N,\beta}(\mu^t)$ to the first term in the right-hand side of \eqref{MFEdiss2} via \eqref{fishmoden}, we find, abbreviating $K_N^t \coloneqq K_{N,\beta}(\mu^t)$, 
\begin{multline}\label{eq:MFEdissLS}
\frac{d}{dt}E_N(f_N^t, \mu^t)  \leq  -\frac{1}{2} \int_{(\R^\d)^N}\int_{(\R^\d)^2\setminus\triangle} (u^\tau(x)-u^\tau(y))\cdot \nabla_1\g(x,y) d\left(\frac1N\sum_{i=1}^N\delta_{x_i} - \mu^t\right)^{\otimes 2}(x,y)d f_N^t \\
-\frac{4}{\beta C_{LS}}\left(E_N(f_N^t,\mu^t) + \frac{\log K_N^t}{\beta N}\right).
\end{multline}
 Under the assumption \eqref{assmp3}, we have
\begin{multline}
\int_{(\R^\d)^N}\left|\int_{(\R^\d)^2\setminus\triangle} (u^\tau(x)-u^\tau(y))\cdot \nabla_1\g(x,y) d\left(\frac1N\sum_{i=1}^N\delta_{x_i} - \mu^t\right)^{\otimes 2}(x,y)\right| d f_N^t \\
\leq   \|u^t \|_{*}\left(C_{RE}H_N(f_N^t\vert (\mu^t)^{\otimes N}) +  C_{ME}\mathbb{E}_{f_N^t}\left[\Fr_N(\XN,\mu^t)\right] + o_N^t(1)\right). \label{eq:MEfvar}
\end{multline}
If $\frac{C_{RE}}{C_{ME}}\leq \frac{1}{\beta}$, then since $H_N(f_N^t \vert (\mu^t)^{\otimes N})\geq 0$, we may assume without loss of generality that $\frac{C_{RE}}{C_{ME}}=\frac{1}{\beta}$. If $\frac{C_{RE}}{C_{ME}}> \frac{1}{\beta}$, then using assumption \eqref{assmp2'},
\begin{align}
C_{ME}\mathbb{E}_{f_N^t}\left[\Fr_N(\XN^t,\mu^t)\right]  &\leq C_{ME}\left( \mathbb{E}_{f_N^t}\left[\Fr_N(\XN,\mu^t)\right] + C_{\beta}H_N(f_N^t\vert (\mu^t)^{\otimes N}) + o_N^t(1)\right) \nn\\
&\leq C_{ME}'\left( \mathbb{E}_{f_N^t}\left[\Fr_N(\XN,\mu^t)\right] + C_{\beta}H_N(f_N^t\vert (\mu^t)^{\otimes N}) + o_N^t(1)\right)
\end{align}
for any $C_{ME}'\geq C_{ME}$. So, choosing $C_{ME}'$ sufficiently large so that $\frac{C_{RE}}{C_{ME}'} + C_{\beta} \leq \frac{1}{\beta}$ (remember that $C_{\beta}<\frac{1}{\beta}$ by assumption), we see that in all cases,
\begin{multline}
\int_{(\R^\d)^N}\left|\int_{(\R^\d)^2\setminus\triangle} (u^\tau(x)-u^\tau(y))\cdot \nabla_1\g(x,y) d\left(\frac1N\sum_{i=1}^N\delta_{x_i} - \mu^t\right)^{\otimes 2}(x,y)\right| d f_N^t \\
\leq \mathsf{C}\|u^t \|_{*}\left(\frac{1}{\beta}H_N(f_N^t \vert (\mu^t)^{\otimes N}) +\mathbb{E}_{f_N^t}\left[\Fr_N(\XN,\mu^t)\right]  + o_N^t(1)\right),
\end{multline}
for some constant $\mathsf{C}>0$. To establish a Gr\"{o}nwall relation, we use the quantity \eqref{eq:EcNdef}. We see from combining \eqref{eq:MFEdissLS} and \eqref{eq:MEfvar} that
\begin{align}
\frac{d}{dt}\mathcal{E}_N^t &\leq -\frac{4}{\beta C_{LS}}\left(E_N(f_N^t,\mu^t) + \frac{\log K_N^t}{\beta N}\right) + \frac{\Cs}{2}\| u^t\|_{*}\Ec_N^t {+ \dot{o}_N^t(1)}\nn\\
&=\left(-\frac{4}{\beta C_{LS}}+\frac{\Cs\| u^t\|_{*}}{2}\right)\Ec_N^t + \frac{4}{\beta C_{LS}}\left(o_N^t(1)- \frac{\log K_N^t}{\beta N} \right).
\end{align}
{Recall that $\dot{o}_N^t(1)$ denotes the time derivative.} Multiplying both sides by $e^{\int_0^t(\frac{4}{\beta C_{LS}}-\frac{\Cs\| u^{\tau'}\|_{*}}{2})d\tau'}$, we obtain
\begin{equation}
\frac{d}{dt}\left[e^{\int_0^t(\frac{4}{\beta C_{LS}}-\frac{\Cs\| u^{\tau'}\|_{*}}{2})d\tau'} \mathcal{E}_N^t \right] \leq e^{\int_0^t(\frac{4}{\beta C_{LS}}-\frac{\Cs\| u^{\tau'}\|_{*}}{2})d\tau'}\left[{\dot{o}_N^t(1) +} \frac{4}{\beta C_{LS}}\left(o_N^t(1) - \frac{\log K_N^t}{\beta N} \right)\right].
\end{equation}
Now using the fundamental theorem of calculus followed by a little rearrangement,
\begin{multline}\label{eq:EcNtfin}
\Ec_N^t \leq  e^{-\frac{4 t}{\beta C_{LS}}+\int_0^t\frac{\Cs\|u^{\tau}\|_{*}}{2}d\tau}\Ec_N^0\\
 +e^{-\frac{4 t}{\beta C_{LS}}+\int_0^t\frac{\Cs\|u^{\tau}\|_{*}}{2}d\tau}\int_0^t e^{\frac{4\tau}{\beta C_{LS}}-\int_0^\tau\frac{\Cs\| u^{\tau'}\|_{*}}{2}d\tau'}\left[{\dot{o}_N^\tau(1)} + \frac{4}{\beta C_{LS}} \left(o_N^\tau(1) - \frac{\log K_N^\tau}{\beta N} \right)\right]d\tau.
\end{multline}
This gives the estimate \eqref{eq:main} and therefore completes the proof of \cref{thm:main}.



\section{Uniform LSI for $\d=1$ Riesz case}\label{sec:LSI}
We show in this section that a uniform modulated LSI holds in the $\d=1$ repulsive Riesz case \eqref{rieszcase}
for uniformly convex confinement $V$. Using the notation from the introduction,
\begin{align}\label{eq:HNLSI}
\mathcal{H}_N(\XN) \coloneqq \sum_{i=1}^N V(x_i) + \frac{1}{N}\sum_{1\leq i<j\leq N}\g(x_j-x_i).
\end{align}

\begin{remark}\label{rem:C2}
In fact, the proof will show that a modulated LSI holds for any interaction potential $\g$ which is convex or for any $C^2$ interaction potential with $\|\g\|_{\dot{C}^2}$ sufficiently small depending on the convexity of $V$. We leave the details as an exercise for the reader. We expect that one could generalize further by following the proof of Zegarlinski's theorem \cite{Zegarlinski1992}, as used to show uniform LSIs in \cite{GLWZ2022}, or the two-scale approach of \cite{GOVW2009}, but will not pursue this.
\end{remark}

\begin{remark}\label{rem:Kuramoto}
As explained in \cite{Rosenzweig2023Kura}, the approach of \cite{BB2019spin} implies the LSI up to the critical inverse temperature for the Gibbs measure of the mean-field classical XY/$O(2)$/planar rotator/Kuramoto model, whose energy is far from convex. It is straightforward to adapt the reasoning of \cref{ssec:LSIQ1D} to obtain a modulated LSI for $\mu$ close enough to $\mu_\beta=1$.
\end{remark}

\subsection{Uniform LSI for $\mathbb{P}_{N,\be}^V$}\label{ssec:LSIP1D}
Following Chafa\"{i}-Lehec \cite{CL2020},\footnote{Strictly speaking, \cite{CL2020} considers the $\d=1$ $\log$ case; but the argument works with trivial modification in the general Riesz case. Furthermore, Chafa\"{i}-Lehec present more than one proof; but we choose to highlight the one based on Caffarelli's contraction theorem.} we present the LSI for the Gibbs measure $\mathbb{P}_{N,\be}^V$ in the $\d=1$ Riesz case with uniformly convex confinement $V$. This is a warm-up for proving the modulated LSI in the next subsection.

\begin{prop}\label{prop:1DRieszLSI}
Let $V: \R\rightarrow \R$ be $\ka$-convex for some $\ka>0$. For $\be> 0$, the probability measure $\mathbb{P}_{N,\be}^V$ has LSI constant $\frac{2}{\beta\ka}$.
\end{prop}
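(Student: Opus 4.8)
The plan is to use Caffarelli's contraction theorem, following Chafa\"{i}--Lehec \cite{CL2020}. The key observation is that in dimension $\d=1$ the Riesz interaction $\g(x) = -\log|x|$ or $\g(x) = \frac{1}{s}|x|^{-s}$ with $s\in(0,1)$ is \emph{convex} on each half-line, and more importantly, the pairwise interaction term $\sum_{i<j}\g(x_j - x_i)$ restricted to the ordered chamber $\{x_1 < x_2 < \dots < x_N\}$ is a convex function of $\XN$. Indeed, $x\mapsto\g(x)$ is convex on $(0,\infty)$, and $(x_1,\dots,x_N)\mapsto x_j - x_i$ is linear, so each summand is convex on the open chamber; the sum is therefore convex there.

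First I would restrict attention to the positivity chamber (or rather the ordered simplex): by symmetry of $\mathcal{H}_N$ under permutations, $\mathbb{P}_{N,\be}^V$ is the pushforward under the symmetrization map of $N!$ times its restriction to $\{x_1 < \dots < x_N\}$, and an LSI on the chamber with a given constant yields the same LSI for the symmetrized measure (the symmetrization map is $1$-Lipschitz in the relevant sense, or one argues directly that functions on $(\R^\d)^N$ restrict to each chamber and the Dirichlet form and entropy decompose). On the chamber, the density is proportional to $e^{-\beta\mathcal{H}_N}$ where $\mathcal{H}_N = \sum_i V(x_i) + \frac1N\sum_{i<j}\g(x_j-x_i)$. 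Since $V$ is $\ka$-convex, $\sum_i V(x_i)$ is $\ka$-convex in $\XN$, and the interaction term is convex on the chamber, so $\beta\mathcal{H}_N$ is $\beta\ka$-convex on the chamber.

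Next I would invoke Caffarelli's contraction theorem: the Brenier map $T$ pushing the standard Gaussian $\gamma$ on $\R^N$ (with the appropriate variance, namely covariance $\frac{1}{\beta\ka}I$) onto $\mathbb{P}_{N,\be}^V|_{\text{chamber}}$ is $1$-Lipschitz, because the target measure is a log-concave perturbation of that Gaussian in the sense that its potential minus the Gaussian potential is convex. Since the Gaussian with covariance $\frac{1}{\beta\ka}I$ satisfies LSI with constant $\frac{2}{\beta\ka}$, and LSI constants transport under Lipschitz pushforward (a $1$-Lipschitz map can only decrease the LSI constant), the measure $\mathbb{P}_{N,\be}^V$ satisfies LSI with constant $\frac{2}{\beta\ka}$, uniformly in $N$. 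One minor technical point is that the chamber is not all of $\R^N$, so one should either work with the convex domain $\{x_1 < \dots < x_N\}$ directly (Caffarelli's theorem and the Bakry--\'Emery criterion apply on convex domains with the convention that $\g = +\infty$ off the chamber, which is still convex lower semicontinuous), or approximate $\g$ by smooth convex potentials and pass to the limit.

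The main obstacle I anticipate is the careful handling of the reduction to the ordered chamber together with the singularity of $\g$ at the diagonal: one must check that the LSI constant is genuinely preserved under the symmetrization and that the convexity argument (Bakry--\'Emery / Caffarelli) is legitimate despite $\g$ blowing up on the boundary of the chamber. This is handled by a routine regularization: replace $\g$ by $\g_\epsilon$ (e.g. truncate or mollify so that $\g_\epsilon$ is $C^2$, convex on the chamber, and finite), obtain the uniform LSI constant $\frac{2}{\beta\ka}$ for each $\epsilon$, and then let $\epsilon \to 0$ using lower semicontinuity of the Dirichlet form and the entropy along with the convergence of the measures. The convexity of $\g$ on the half-line is what makes this work and is the essential structural input special to $\d=1$.
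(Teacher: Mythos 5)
Your proposal follows essentially the same route as the paper: reduction to the ordered Weyl chamber by exchangeability, convexity of the interaction on the chamber (convexity of $\g$ on $\R_+$ composed with the linear map $x_j-x_i$) so that $\mathcal H_N$ is $\ka$-convex there, and then Caffarelli's contraction theorem transporting the LSI constant $\frac{2}{\beta\ka}$ of the Gaussian with covariance $(\beta\ka)^{-1}I$ via the $1$-Lipschitz Brenier map. The only difference is the regularization step handling the singularity and the restricted support: you mollify/truncate $\g$, whereas the paper smooths the measure itself with the Ornstein--Uhlenbeck semigroup (log-concavity preserved by Pr\'ekopa--Leindler) and passes to the limit $t\to0$; both are standard implementations of the same idea.
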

\begin{proof}
As $V$ is fixed, we omit the superscript in $\mathbb{P}_{N,\be}^V$ in what follows. By exchangeability, it suffices to restrict to the \emph{Weyl chamber}\footnote{This ability to order is, of course, a special feature of the one-dimensional setting.} $\Delta_N \coloneqq \{\XN\in \R^N : x_1\leq \cdots \leq x_N\}$. More precisely, define
\begin{align}
\tl{\g}(x) \coloneqq \begin{cases}\g(x), & {x>0} \\ \infty, & x\leq 0 \end{cases} \qquad \text{and} \qquad \tl{\mathcal{H}}_N(\XN) \coloneqq \sum_{i=1}^N V(x_i) + \frac{1}{N}\sum_{1\leq i<j\leq N}\tl\g(x_j-x_i),
\end{align}
and $d\tl{\mathbb{P}}_{N,\be} =\frac{ e^{-\beta\tl{\mathcal{H}}_N}}{\tl{Z}_{N,\be}}dX_N$. Since
\begin{align}
\int_{\R^N} e^{-\beta\mathcal{H}_N}d\XN = N!\int_{\D_N}e^{-\beta\mathcal{H}_N}d\XN,
\end{align}
it follows that if $\varphi$ is invariant under permutation of coordinates, then
\begin{align}
\int_{\R^N}\varphi^2\log\left(\frac{\varphi^2}{\int\varphi^2d\tl{\mathbb{P}}_{N,\be}}\right)d\tl{\mathbb{P}}_{N,\be}  = \int_{\R^N}\varphi^2\log\left(\frac{\varphi^2}{\int\varphi^2d\tl{\mathbb{P}}_{N,\be} }\right)d\tl{\mathbb{P}}_{N,\be}.
\end{align}
So, $\tl{\mathbb{P}}_{N,\be}$ has LSI constant $C_{LS}$ if and only if $\mathbb{P}_{N,\be}$ has LSI constant $C_{LS}$. Going forward, we drop the $\tl{}$ superscript in $\tl\g,\tl{\mathcal{H}}_N,\tl{\mathbb{P}}_{N,\be}$. 

Assuming that $V$ is $\ka$-convex, for some $\ka>0$, we claim that $\mathcal{H}_N$ is $\ka$-convex. Indeed, let $\XN, \YN\in\Delta_N$ and $\rho\in (0,1)$. We want to show that
\begin{align}
\mathcal{H}_N\left(\rho \XN + (1-\rho)\YN\right) \leq \rho \mathcal{H}_N(\XN) + (1-\rho)\mathcal{H}_N(\YN) -\frac{\ka\rho(1-\rho)}{2}|\YN-\XN|^2.
\end{align}
If $x_i=x_j$ or $y_i=y_j$ for some $1\leq i<j\leq N$, then the right-hand side is infinite and the inequality holds trivially; so, suppose otherwise. Since $V$ is $\ka$-convex, we have for each $i$,
\begin{align}
V(\rho x_i + (1-\rho)y_i) \leq \rho V(x_i) + (1-\rho)V(y_i) - \frac{\ka\rho(1-\rho)}{2}|y_i-x_i|^2.
\end{align}
So, it only remains to show that for each pair $i<j$,
\begin{align}\label{eq:gconv}
\g\left(\left[\rho x_j  + (1-\rho)y_j\right] - \left[\rho x_i + (1-\rho)y_i\right]\right) &= \g\left(\rho(x_j-x_i) + (1-\rho)(y_j-y_i) \right) \nn\\
&\leq \rho\g(x_j-x_i) + (1-\rho)\g(y_j-y_i).
\end{align}
Fix a pair $i<j$. If $x_j-x_i = y_j-y_i$, then there is nothing further to show; so, suppose otherwise. Without loss of generality, suppose $y_j-y_i > x_j-x_i>0$. Then by the fact that
\begin{align}
\forall x>0, \qquad \g''(x) = \begin{cases} \displaystyle \frac{1}{x^2}, & {s=0} \\ \displaystyle \frac{s(s+1)}{|x|^{s+2}}, & {s\neq 0}, \end{cases} 
\end{align}
and therefore $\g$ is convex on $\R_+$, we see that \eqref{eq:gconv} holds.

We perform a qualitative regularization argument that reduces us to the case when $\mathbb{P}_{N,\beta}$ has full support $\R^N$ and $\mathcal{H}_N \in C^2(\R^N)$. Let $\mathbb{G}_N$ be the Gaussian measure with covariance $(\beta\ka)^{-1/2}I_{N\times N}$,
\begin{align}
d\mathbb{G}_N = (2\pi/\beta\ka )^{-\frac{N}{2}}e^{-\frac{\beta\ka|\XN|^2}{2}}d\XN.
\end{align}
Since
\begin{align}
\log\left(\frac{d\mathbb{P}_{N,\be}}{d\mathbb{G}_N}\right) = -\beta\mathcal{H}_N - \log(Z_{N,\be}) + \frac{N}{2}\log\left(\frac{2\pi}{\beta\kappa}\right) + \frac{\beta\kappa|\XN|^2}{2},
\end{align}
we see that $\mathcal{H}_N$ is $\ka$-convex if and only if $\log\left(\frac{d\mathbb{P}_{N,\be}}{d\mathbb{G}_N}\right)$ is concave. Let $\{Q_t\}_{t\geq 0}$ be the Ornstein-Uhlenbeck semigroup with stationary measure $\mathbb{G}_{N}$: for any test function $f$,
\begin{align}
\forall \XN\in \R^N, \qquad (Q_t f)(\XN) \coloneqq \int_{\R^N} f\left(e^{-t}\XN+\sqrt{1-e^{-2t}}\YN \right)d\mathbb{G}_N(\YN).
\end{align}
The measure $\mathbb{G}_N$ is reversible for $\{Q_t\}_{t\geq 0}$. Therefore, $Q_t\#\mathbb{P}_{N,\be}$ is absolutely continuous with respect to $\mathbb{G}_N$, and its Radon-Nikodym derivative $\frac{dQ_t\#\mathbb{P}_{N,\be}}{d\mathbb{G}_N} = Q_t\left(\frac{d\mathbb{P}_{N,\be}}{d\mathbb{G}_N}\right)$. Moreover, as consequence of the Pr\'{e}kopa-Leindler inequality, $Q_t$ preserves log concavity. Hence, $\mathcal{H}_N^t \coloneqq -\frac{1}{\beta}\log\left(Q_t\#\mathbb{P}_{N,\be}\right)$ is $\ka$-convex and belongs to $C^\infty(\R^N)$. Finally, since $\lim_{t\rightarrow 0}(Q_t f)(x) = f(x)$ for any continuous $f$, it follows that $Q_t\#\mathbb{P}_{N,\be}\xrightharpoonup[]{} \mathbb{P}_{N,\be}$ as $t\rightarrow 0$. Thus, if $Q_t\#\mathbb{P}_{N,\be}$ has LSI constant $C_{LS}$ for every $t>0$, then so does $\mathbb{P}_{N,\be}$. 

We proceed under the $C^2$ and full support assumptions. According to Caffarelli's contraction theorem \cite{Caffarelli2000,Caffarelli2000err} (see also \cite{FGP2020} for an alternative proof), if $\mathcal{H}_N$ is $\beta\ka$-convex, then the Brenier map \cite{Brenier1991} $T$ from $\mathbb{G}_N$ to $\mathbb{P}_{N,\be}$ (i.e., $T\#\mathbb{G}_N = \mathbb{P}_{N,\be} $) is $1$-Lipschitz. So, for any test function $\varphi\geq 0$,
\begin{align}
\int_{\R^N}\varphi\log(\varphi) d\mathbb{P}_{N,\be} &= \int_{\R^N}\varphi\log(\varphi) d\left(T\#\mathbb{G}_N\right)  \nn\\
&=\int_{\R^N}(\varphi\circ T) \log\left(\varphi\circ T\right)d\mathbb{G}_N \nn\\
&\leq \frac{2}{\beta\ka}\int_{\R^N} |\nabla(\varphi\circ T)|^2 d\mathbb{G}_N \nn\\
&\leq \frac{2}{\beta\ka}\int_{\R^N}  |(\nabla\varphi)\circ T|^2 |\nabla T|^2 d\mathbb{G}_N \nn\\
&\leq \frac{2}{\beta\ka}\int_{\R^N} |\nabla\varphi|^2 d\mathbb{P}_{N,\be}.
\end{align}
In the third line, we have used the well-known LSI for $\mathbb{G}_N$ \cite{Gross1975}; and in the final line we have used that $\|\nabla T\|_{L^\infty}\leq 1$ together with another application of $T\#\mathbb{G}_N = \mathbb{P}_{N,\be}$. This completes the proof.
\end{proof}

\subsection{Uniform LSI for $\mathbb{Q}_{N,\be}(\mu)$}\label{ssec:LSIQ1D}
Given a density $\mu$, recall from \eqref{QP} and \eqref{defVmub} that
\begin{align}
\mathbb{Q}_{N,\beta}(\mu)= \mathbb{P}_{N,\beta}^{V_{\mu, \beta}}, \qquad \text{where} \quad V_{\mu,\beta}\coloneqq - \g * \mu - \frac1{\beta} \log \mu.
\end{align}
We recycle the notation $\mathcal{H}_N$, so that
\begin{align}
\mathcal{H}_N(\XN) = \sum_{i=1}^N V_{\mu,\beta}(x_i) +  \frac1{2N} \sum_{1\leq i\neq j\leq N} \g(x_i-x_j).
\end{align}
The advantage of this notation is that assuming $V_{\mu,\beta}$ is $\ka$-convex, for some $\ka>0$, we may apply \cref{prop:1DRieszLSI} with $V$ replaced by ${V}_{\mu,\beta}$ to obtain a uniform LSI for $\mathbb{Q}_{N,\beta}(\mu)$.

\begin{prop}\label{prop:1DRieszmLSI1}
Suppose that $\mu\in \P(\R) \cap L^\infty(\R)$ and if $s=0$, also suppose that $\int\log(1+|x|)d\mu<\infty$.\footnote{The $L^\infty$ and log moment assumptions are just to ensure that the convolution $\g\ast\mu$ is well-defined.} For $\beta>0$, suppose that $V_{\mu,\beta}$ is $\ka$-convex, for some $\ka>0$. Then the probability measure $\mathbb{Q}_{N,\beta}(\mu)$ has LSI constant $\frac{2}{\beta\ka}$.
\end{prop}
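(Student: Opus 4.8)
The plan is to deduce this from \cref{prop:1DRieszLSI} with essentially no new work. By the identity \eqref{QP}, one has $\mathbb{Q}_{N,\beta}(\mu) = \mathbb{P}_{N,\beta}^{V_{\mu,\beta}}$, so it suffices to verify that the statement and proof of \cref{prop:1DRieszLSI} remain valid when the confining potential $V$ is replaced by $V_{\mu,\beta}$. Inspecting that proof, the only structural hypothesis on the confinement that was used is its $\ka$-convexity, which is precisely what we now assume for $V_{\mu,\beta}$; and the only property of the interaction that entered is that $\g$ is convex on $\R_+$, which still holds since $\g$ is the same Riesz kernel. Consequently the reduction to the Weyl chamber $\D_N$ and the verification that the (recycled) Hamiltonian $\mathcal H_N$ is $\ka$-convex go through line by line.

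Before invoking that machinery I would first check that $\mathbb{Q}_{N,\beta}(\mu)$ is a genuine probability measure, i.e., $K_{N,\beta}(\mu)<\infty$ in \eqref{defKNbe}: the $L^\infty$ bound on $\mu$, together with the logarithmic moment assumption when $s=0$, ensures that $\g\ast\mu$ and hence $V_{\mu,\beta}$ is well-defined and finite, while $\ka$-convexity of $V_{\mu,\beta}$ forces at least quadratic growth of the confinement at infinity; since the repulsive Riesz interaction is bounded below on compact sets, $e^{-\beta N F_N(\cdot,\mu)}$ is $\mu^{\otimes N}$-integrable. Next I would rerun the qualitative regularization of \cref{prop:1DRieszLSI}: mollifying $\mathbb{Q}_{N,\beta}(\mu)$ through the Ornstein--Uhlenbeck semigroup whose stationary measure is the Gaussian $\mathbb{G}_N$ from that proof yields $C^\infty$, fully supported measures whose densities relative to $\mathbb{G}_N$ remain log-concave by Pr\'ekopa--Leindler and which converge weakly to $\mathbb{Q}_{N,\beta}(\mu)$; since the LSI constant is stable under this weak limit, it suffices to establish the bound under the $C^2$ and full-support assumptions. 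Finally, Caffarelli's contraction theorem provides a $1$-Lipschitz Brenier map from $\mathbb{G}_N$ to $\mathbb{Q}_{N,\beta}(\mu)$, and transporting the Gaussian LSI for $\mathbb{G}_N$ through it yields exactly the LSI constant $\frac{2}{\beta\ka}$, by the same chain of inequalities closing the proof of \cref{prop:1DRieszLSI}.

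Since each step is a transcription of the corresponding step for $\mathbb{P}_{N,\beta}^V$, I do not expect any real obstacle here; the only point requiring a little care is confirming that $V_{\mu,\beta}$ inherits enough growth and integrability for the regularization and for Caffarelli's theorem to apply, but this is immediate from the $\ka$-convexity hypothesis and the standing assumptions on $\mu$. The substantive question left open by this proposition --- for which densities $\mu$ (say, near the thermal equilibrium $\mu_\beta$, with $V$ uniformly convex) the potential $V_{\mu,\beta}$ is actually $\ka$-convex --- is what the next result addresses.
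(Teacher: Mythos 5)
Your proposal is correct and follows exactly the paper's route: the paper proves \cref{prop:1DRieszmLSI1} simply by invoking the identity \eqref{QP}, $\mathbb{Q}_{N,\beta}(\mu)=\mathbb{P}_{N,\beta}^{V_{\mu,\beta}}$, and applying \cref{prop:1DRieszLSI} with $V$ replaced by $V_{\mu,\beta}$, since that proof only used $\ka$-convexity of the confinement and convexity of $\g$ on $\R_+$. Your additional checks (finiteness of $K_{N,\beta}(\mu)$, the Ornstein--Uhlenbeck regularization, and applicability of Caffarelli's theorem) are just a faithful transcription of the corresponding steps of \cref{prop:1DRieszLSI} and raise no new issues.
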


To give meaning to \cref{prop:1DRieszmLSI1}, we now specify conditions under which $V_{\mu,\beta}$ is uniformly convex.

\begin{lemma}
Let $\mu  \in \P(\R)$ be such that $\log\frac{\mu}{\mu_\be} \in C^2(\R)$.\footnote{Since $\mu_\be \in C^2$, this assumption implies by the chain rule that $\mu\in C^2$.} Suppose $V\in C^2$ and $\be>0$. Then $V_{\mu,\beta}$ is $\ka$-convex with
\begin{align}\label{eq:kaVmu}
\ka \coloneqq \inf V'' -\left(\frac{1}{\beta}\|\log\frac{\mu}{\mu_\be}\|_{\dot{C}^2} + \|\g\ast(\mu-\mu_\be)\|_{\dot{C}^2}\right).
\end{align}
\end{lemma}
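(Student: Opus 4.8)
The plan is to verify directly from the definition $V_{\mu,\beta} = -\g\ast\mu - \frac{1}{\beta}\log\mu$ that the Hessian (here just the second derivative, since $\d=1$) is bounded below by $\ka$. First I would use the characterization \eqref{charactem} of the thermal equilibrium measure $\mu_\beta$, namely $\g\ast\mu_\beta + V + \frac{1}{\beta}\log\mu_\beta = c_\beta$, to rewrite $V_{\mu,\beta}$ in a form that isolates $V$. Indeed, adding and subtracting the $\mu_\beta$ terms,
\begin{align*}
V_{\mu,\beta} &= -\g\ast\mu - \frac{1}{\beta}\log\mu \\
&= -\g\ast\mu_\beta - \frac{1}{\beta}\log\mu_\beta - \g\ast(\mu-\mu_\beta) - \frac{1}{\beta}\log\frac{\mu}{\mu_\beta} \\
&= V - c_\beta - \g\ast(\mu-\mu_\beta) - \frac{1}{\beta}\log\frac{\mu}{\mu_\beta}.
\end{align*}
The constant $c_\beta$ does not affect convexity, so it suffices to analyze $V - \g\ast(\mu-\mu_\beta) - \frac{1}{\beta}\log\frac{\mu}{\mu_\beta}$.

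Next I would differentiate twice. Under the regularity hypothesis $\log\frac{\mu}{\mu_\beta}\in C^2(\R)$ and $V\in C^2$, each term is twice differentiable (the convolution term $\g\ast(\mu-\mu_\beta)$ is $C^2$ because $\mu - \mu_\beta$ is a difference of two $C^2$ densities, and one can integrate by parts to move derivatives onto the mollifying factor; more simply, the hypothesis that $\log\frac{\mu}{\mu_\beta}\in C^2$ together with $\mu_\beta\in C^2$ forces $\mu\in C^2$, and the stated $\dot{C}^2$ seminorm of $\g\ast(\mu-\mu_\beta)$ is assumed finite in \eqref{eq:kaVmu}). Then for every $x\in\R$,
\begin{align*}
V_{\mu,\beta}''(x) = V''(x) - \big(\g\ast(\mu-\mu_\beta)\big)''(x) - \frac{1}{\beta}\Big(\log\frac{\mu}{\mu_\beta}\Big)''(x).
\end{align*}
Bounding the first term below by $\inf V''$ and the last two terms below by the negatives of their $\dot{C}^2$-seminorms, i.e. $-\|\g\ast(\mu-\mu_\beta)\|_{\dot{C}^2}$ and $-\frac{1}{\beta}\|\log\frac{\mu}{\mu_\beta}\|_{\dot{C}^2}$ respectively, yields $V_{\mu,\beta}''(x)\ge\ka$ with $\ka$ as in \eqref{eq:kaVmu}. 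Since $\d=1$, a lower bound on the second derivative is exactly $\ka$-convexity, so the proof concludes.

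This argument is essentially a one-line computation once the rewriting via \eqref{charactem} is in place, so there is no serious obstacle; the only point requiring a modicum of care is justifying that $\g\ast(\mu-\mu_\beta)$ is genuinely $C^2$ so that the pointwise second-derivative inequality makes sense — but this is exactly what the finiteness of $\|\g\ast(\mu-\mu_\beta)\|_{\dot{C}^2}$ in the statement of $\ka$ encodes, and one can note that $\g\ast(\mu-\mu_\beta) = \big(V_{\mu,\beta} + \tfrac1\beta\log\tfrac{\mu}{\mu_\beta}\big) - (V - c_\beta)$ is a difference of $C^2$ functions under the stated hypotheses, hence $C^2$ itself. (Alternatively, if $\mu$ decays and is smooth enough one can differentiate under the integral sign directly; the $L^\infty$ and log-moment assumptions guarantee $\g\ast\mu$ is at least well-defined.) I would remark that if $\ka>0$, combining this lemma with \cref{prop:1DRieszmLSI1} immediately gives the uniform modulated LSI with constant $\frac{2}{\beta\ka}$, which is the payoff alluded to in \cref{ssec:introMR}.
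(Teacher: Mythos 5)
Your proof is correct and follows essentially the same route as the paper: add and subtract the $\mu_\beta$ terms, use the characterization \eqref{charactem} to replace $-\g\ast\mu_\beta - \frac{1}{\beta}\log\mu_\beta$ by $V - c_\beta$, and then bound $V_{\mu,\beta}''$ from below via the triangle inequality on the two perturbation terms. No substantive difference from the paper's argument.
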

\begin{proof}
Recalling the definition of $V_{\mu,\beta}$,
\begin{align}
V_{\mu,\beta} &= - \g\ast(\mu-\mu_\beta + \mu_\beta) - \frac1{\beta} \log(\frac{\mu}{\mu_\beta}\mu_\beta) \nn\\
&=-\g\ast\left(\mu-\mu_\beta\right)-\frac{1}{\beta}\log\frac{\mu}{\mu_\beta} - \left(\g\ast\mu_\beta + \frac{1}{\beta}\log\mu_\beta\right) \nn\\
&=-\g\ast\left(\mu-\mu_\beta\right)-\frac{1}{\beta}\log\frac{\mu}{\mu_\beta} + V - c_\beta,
\end{align}
where to obtain the third line, we have applied \eqref{charactem} to the last term of the second line. By triangle inequality,
\begin{align}
V_{\mu,\beta}'' \geq V'' - \|\g\ast(\mu-\mu_\beta)\|_{\dot{C}^2} -\frac{1}{\beta}\|\log\frac{\mu}{\mu_\beta}\|_{\dot{C}^2},
\end{align}
from which the desired conclusion is immediate.
\end{proof}

\begin{remark}
One can produce probability measures $\mu$ such that $\log\frac{\mu}{\mu_\be} \in C^2$ by choosing $h\in C^2$ and then setting $\mu \coloneqq \frac{e^{h}\mu_\be}{\int e^hd\mu_\be}$, which is tautologically a probability density. One can make the quantities $\|\log\frac{\mu}{\mu_\be}\|_{\dot{C}^2}$, $\|\g\ast(\mu-\mu_\be)\|_{\dot{C}^2}$ arbitrarily small by taking $\|e^{h}-1\|_{C^2}$ arbitrarily small. In particular, we see that there exist non-equilibrium densities $\mu$ such that $V_{\mu,\beta}$ is uniformly convex. 
\end{remark}

 As a corollary in this one-dimensional Riesz case with uniformly convex confinement, suppose we start the dynamics \eqref{eq:SDE} from an initial data $\mu^0$ that is close enough to $\mu_\beta$ in the sense that \eqref{eq:kaVmu} with $\mu = \mu^0$ is strictly positive. If this closeness persists throughout the dynamics \eqref{meanfield} in the sense that \eqref{eq:kaVmu} with $\mu=\mu^t$ is bounded from below by some $\ka_0>0$ uniformly in $t$ (this is a consequence of the aforementioned forthcoming work \cite{HRS2022}), then \cref{thm:main} applies, showing entropic generation of chaos.

\appendix 
\section{Proof of the smallness of free energy in Riesz and regular cases}
In this appendix, we prove the smallness of the free energy \eqref{condK} in the cases \eqref{rieszcase} and in the case of bounded continuous nonnegative interactions, by showing 
\begin{equation}\label{boundcondK}
|\log K_{N,\beta}(\mu)|\le \beta \, o(N), 
\end{equation}
for $o(N)$ independent of $\beta$.

The upper bound is obtained straightforwardly  in the Riesz cases from inserting \eqref{eq:MElbRiesz} into \eqref{defKNbe}, then using that $\mu$ is a probability measure: 
\begin{equation}
\log K_{N,\beta}(\mu) \le  \beta  \begin{cases} \frac{\log(N\|\mu\|_{L^\infty})}{2 \d}\indic_{s=0} + \Cs\|\mu\|_{L^\infty}^{\frac{s}{\d}}N^{\frac{s}{\d}}, & {\d-2\le s <\d} \\ \mathsf{C}\log(N\|\mu\|_{L^\infty})\indic_{s=0} + \mathsf{C}\|\mu\|_{L^\infty}^{\frac{s}{\d}} N^{1 -\frac{2(\d-s)}{2(\d-s)+s(\d+2)}}, &{s<\d-2}.\end{cases}
\end{equation}
In all cases, the preceding right-hand side is $\beta \, o(N)$. When $\g $ is nonnegative and continuous, one can insert the diagonal back into the definition of $F_N$, which implies that 
\begin{align}
F_N(\XN, \mu) \ge - \frac{1}{2N} \g(0,0),
\end{align}
and the proof of the upper bound is concluded in the same way.

The lower bound follows from Jensen's inequality. Indeed,
\begin{equation}\label{lbk}
\log K_{N,\beta}(\mu)  \ge - \beta  N  \mathbb{E}_{\mu^{\otimes N}}\left[F_N(\XN, \mu)\right].
\end{equation}
We then expand out  the definition \eqref{defF} of $F_N$ and use the symmetry of $\g$ to find that 
\begin{align}
\mathbb{E}_{\mu^{\otimes N}}\left[F_N(\XN, \mu)\right] &= \int_{(\R^\d)^N}\Bigg( \frac{1}{2N^2}  \sum_{i \neq j} \g(x_i,x_j) - \frac{1}{N}  \sum_{i=1}^N\int_{\R^\d}  \g(x_i,y)d\mu(y)  \nn\\
&\ph\qquad +\hal\int_{(\R^\d)^2}\g(x,y)d\mu^{\otimes 2}(x,y)\Bigg) d\mu^{\otimes N} (\XN)\nn\\
&=- \frac1{2N}   \int_{(\R^\d)^2} \g(x,y) d\mu^{\otimes 2}(x,y).
\end{align}
Inserting the last line back into the right-hand side of \eqref{lbk} yields 
\begin{align}
\log K_{N,\beta}(\mu)  \ge  \frac{\beta}{2} \int_{(\R^\d)^2}\g(x-y) d\mu^{\otimes2}(x,y),
\end{align}
which gives the desired lower bound in all cases \eqref{rieszcase} and all cases where $\g$ is bounded.


\bibliographystyle{alpha}
\bibliography{../../MASTER}
\end{document}